\date{}
\newcommand{\re}{\mathbb{R}}
\newcommand{\hatu}{\widehat{u}}
\newcommand{\DPM}{\operatorname{DPM}}
\newcommand{\TV}{TV}
\newtheorem{thm}{Theorem}[section]
\newtheorem{thmbibl}{Theorem}
\newtheorem{rmk}[thm]{Remark}
\newtheorem{prop}[thm]{Proposition}
\newtheorem{defn}[thm]{Definition}
\newtheorem{lemma}[thm]{Lemma}
\title{
Monotonicity properties of limits of solutions to the semi-discrete scheme for the Perona-Malik equation
}
\author{
Massimo Gobbino\vspace{1ex}\\ 
{\normalsize Università degli Studi di Pisa} \\
{\normalsize Dipartimento di Matematica}\\ 
{\normalsize PISA (Italy)}\\  
{\normalsize e-mail: \texttt{massimo.gobbino@unipi.it}}
\and
Nicola Picenni\vspace{1ex}\\ 
{\normalsize Scuola Normale Superiore} \\
{\normalsize PISA (Italy)}\\
{\normalsize e-mail: \texttt{nicola.picenni@sns.it}}
}
\begin{document}
\maketitle

\begin{abstract}

We consider generalized solutions of the Perona-Malik equation in dimension one, defined as all possible limits of solutions to the semi-discrete approximation in which derivatives with respect to the space variable are replaced by difference quotients. 

Our first result is a pathological example in which the initial data converge strictly as bounded variation functions, but strict convergence is not preserved for all positive times, and in particular many basic quantities, such as the supremum or the total variation, do not pass to the limit. Nevertheless, in our second result we show that all our generalized solutions satisfy some of the properties of classical smooth solutions, namely the maximum principle and the monotonicity of the total variation.

The verification of the counterexample relies on a comparison result with suitable sub/supersolutions. The monotonicity results are proved for a more general class of evolution curves, that we call $uv$-evolutions.

\vspace{6ex}

\noindent{\bf Mathematics Subject Classification 2020 (MSC2020):} 
35D30, 35B50, 35K59, 47J06.

\vspace{6ex}



\noindent{\bf Key words:} 
Perona-Malik equation, forward-backward parabolic equation, generalized solutions, semi-discrete scheme, maximum principle, total variation.

\end{abstract}

 
\section{Introduction}  

In this paper we consider the evolution equation
\begin{equation}
u_{t}=\left(\frac{u_{x}}{1+u_{x}^{2}}\right)_{x}=\frac{1-u_{x}^{2}}{(1+u_{x}^{2})^{2}}\,u_{xx}
\label{defn:PM-eqn}
\end{equation}
in an interval $(a,b)\subseteq\re$, with boundary conditions (usually of homogeneous Neumann type), and an initial datum at $t=0$.

Equation (\ref{defn:PM-eqn}) is the one-dimensional version of the celebrated equation introduced by P.~Perona and J.~Malik in~\cite{PeronaMalik} as a tool for image denoising. The main feature is that it is a \emph{forward parabolic} equation in the subcritical regime where $|u_{x}|<1$, and a \emph{backward parabolic} equation in the supercritical regime where $|u_{x}|>1$. This is bad news, because backward parabolic evolution equations are notoriously ill-posed.

From the variational point of view, this bad news is related to the fact that (\ref{defn:PM-eqn}) is, at least formally, the gradient flow of the so-called Perona-Malik functional
\begin{equation}
\operatorname{PM}(u):=\frac{1}{2}\int_{a}^{b}\log\left(1+u_{x}(x)^{2}\right)\,dx,
\nonumber
\end{equation}
whose Lagrangian 
\begin{equation}
\varphi(\sigma):=\frac{1}{2}\log(1+\sigma^{2})
\qquad
\forall\sigma\in\re
\label{defn:varphi}
\end{equation}
is convex only in the region where $|u_{x}|<1$ and, even worse, it has a convexification that is identically zero. A similar qualitative behavior happens whenever the Lagrangian has a convex-concave behavior and sublinear growth, as for example when
\begin{equation}
\varphi(\sigma)=\arctan(\sigma^{2})
\qquad\qquad\text{or}\qquad\qquad
\varphi(\sigma)=\sqrt[4]{1+\sigma^{2}},
\label{defn:varphi-more}
\end{equation}
in which case equation (\ref{defn:PM-eqn}) takes the more general form
\begin{equation}
u_{t}=(\varphi'(u_{x}))_{x}=\varphi''(u_{x})u_{xx}.
\label{defn:PM-eqn-phi}
\end{equation}

Despite the analytical ill-posedness, numerical simulations both for (\ref{defn:PM-eqn}) and for the original two dimensional model exhibit more stability than expected. A satisfactory theory that explains this unexpected stability, usually called ``the Perona-Malik paradox'' after~\cite{Kichenassami}, is still out of reach. By ``satisfactory theory'' we mean a notion of solution for (\ref{defn:PM-eqn}) that exists for a reasonable class of initial data, depends in a reasonable way on the initial condition, and to which solutions of approximating models converge.

\paragraph{\textmd{\textit{The semi-discrete scheme and generalized solutions}}}

Several variants have been introduced in the last 30 years in order to mitigate the ill-posed nature of (\ref{defn:PM-eqn}). These variants involve either convolutions~\cite{1992-SIAM-Lions}, or a time-delay~\cite{2007-Amann}, or fractional derivatives~\cite{2009-JDE-Guidotti}, or addition of a dissipative term (see~\cite{2020-JDE-BerSmaTes} and the references quoted therein), or higher order singular perturbations~\cite{1996-Duke-DeGiorgi,2008-TAMS-BF,2019-SIAM-BerGiaTes,GP:fastPM-CdV}, or semi-discrete schemes where space derivatives are replaced by finite differences~\cite{2001-CPAM-Esedoglu,2006-SIAM-BelNovPao,GG:grad-est,2008-JDE-BNPT,2011-M3AS-BelNovPao,2011-SIAM-SlowSD}.  All these models involve some small parameter, such as the width of the time-delay, the coefficient of the higher order perturbation, or the size of the grid in the discrete approximation. All these models behave quite well for any fixed positive value of the parameter. Problems arise when one looks for estimates that are uniform with respect to the parameter, and therefore can be useful in order to pass to the limit when the parameter vanishes.

As far as we know, estimates of this type are known only for the semi-discrete scheme in dimension one (see~\cite{GG:grad-est,2008-JDE-BNPT}), and for this reason in this paper we focus on this approximation. In this case it is known that, for any fixed value of the parameter, both the $L^{\infty}$ norm and the total variation of the solution are nonincreasing functions of time (see Theorem~\ref{thmbibl:existence}). Therefore, if they are bounded for $t=0$ independently of the discretization parameter, then they remain uniformly bounded for all positive times, and this is enough to show that a limit exists, at least up to subsequences, and it is a bounded variation function for every $t\geq 0$ (see Theorem~\ref{thmbibl:compactness}). At this point one can consider all possible limits of semi-discrete approximations as generalized solutions to (\ref{defn:PM-eqn}) (see Definition~\ref{defn:gen-sol}).

A characterization of all possible limits is still out of reach, apart from some partial results (see Theorem~\ref{thmbibl:v}), because it is not clear how to pass to the limit in the quasi-linear term. In particular, it was not known whether some good properties of the approximating solutions, such as the maximum principle and the monotonicity of the total variation, remain valid for all possible limits. We observe that these properties are extremely reasonable for what is expected to be a denoising tool. In this paper we investigate these questions.

\paragraph{\textmd{\textit{First main result -- A counterexample}}}

We already observed that, for every fixed value of the grid size, both the $L^{\infty}$ norm and the total variation of solutions are nonincreasing functions of time. Therefore, both the maximum principle and the monotonicity of the total variation for generalized solutions would be trivially true if we knew that ``the total variation of the limit is the limit of the total variations'' or ``the maximum of the limit is the limit of the maxima''. 

Unfortunately, the $L^{2}$ convergence provided by the compactness result is not enough to pass such quantities to the limit for all times, even if they pass to the limit at the initial time.  Indeed, in the first result of this paper (Theorem~\ref{main:no-strict}) we provide an explicit example of solutions to the semi-discrete scheme whose maximum and total variation \emph{pass to the limit at the initial time, but do not pass to the limit for every subsequent time}.

In a nutshell, what happens in that example can be described as follows. Assume that we consider equation (\ref{defn:PM-eqn}) in the interval $(0,\pi)$, with initial datum $u_{0}(x)=c_{0}\sin x$ for some $c_{0}\in(0,1)$, and (for simplicity) Dirichlet boundary conditions. In this case the problem has a unique classical solution $u(t,x)$, which always satisfies $0\leq u_{x}(t,x)\leq c_{0}$, and does not even realize that a backward parabolic regime exists. As expected, the qualitative behavior of $u(t,x)$ is similar to the solution of the heat equation with the same initial datum, namely it has a profile that resembles a hill that continues to decrease until it disappears as $t\to +\infty$.

Now let us approximate $u_{0}(x)$ with a sequence $u_{0n}(x)$ of piecewise constant functions, and let us consider the solution $u_{n}(t,x)$ of the semi-discrete version of (\ref{defn:PM-eqn}) with $u_{0n}(x)$ as initial datum. Then we expect that $u_{n}(t,x)$ mimics $u(t,x)$, but this is true only up to some extent, and it is very sensitive to the choice of the sequence $u_{0n}$. Indeed, it is enough to modify slightly the values of $u_{0n}(x)$ near the maximum point, and what happens is that those values do not follow the evolution of the rest, but they remain rather close to the initial maximum value for large times.

It is still true that $u_{0n}(x)$ converges uniformly to $u_{0}(x)$, and also the maximum and the total variation of $u_{0n}(x)$ converge to the corresponding quantities of $u_{0}(x)$. It is still true that $u_{n}(t,x)$ converges to $u(t,x)$ in $L^{p}((0,\pi))$ for every $t\geq 0$ and every finite $p\geq 1$. But the maximum of $u_{n}(t,x)$ does not tend to the maximum of $u(t,x)$ for every $t>0$, and the same for the total variation, due to the few anomalous values that remain higher than expected.

One could interpret this pathology by saying that $u_{n}(t,x)$ does not converge truly to $u(t,x)$, but to some exotic object that coincides with $u(t,x)$ with an isolated anomalous maximum point, and analogously $u_{0n}(x)$ does not converge truly to $u_{0}(x)$. This approach might be viable, and we plan to explore it in some future research, but it requires the identification of a suitable generalization of the notion of function and derivative, maybe in the spirit of varifolds as in~\cite{GP:fastPM-CdV}. Within this approach the dirty job is done by the definition of the exotic objects, and at the end of the day the convergence of maxima/minima and of the total variation are enforced in the definition, and as a consequence the maximum principle and the monotonicity of the total variation for the limit objects are immediate.

\paragraph{\textmd{\textit{Second main result -- Monotonicity properties of generalized solutions}}}

In this paper we pursue a different path. We remain in the classical setting of functions with bounded variation, where the compactness result provides the limit, and in our second main result (Theorem~\ref{main:monotonicity}) we show that all possible limits satisfy the expected monotonicity properties, even if we know that the related quantities do not pass to the limit from the semi-discrete to the continuous setting.

In order to prove this result, we exploit a weak characterization of generalized solutions. It is known (see Theorem~\ref{thmbibl:v}) that they satisfy an equation of the form $u_{t}=v_{x}$ for a suitable function $v$. From (\ref{defn:PM-eqn}) one would expect that
\begin{equation}
v=\frac{u_{x}}{1+u_{x}^{2}}=\varphi'(u_{x}),
\nonumber
\end{equation}
but in general this is not true, at least if one understands $u_{x}$ as the standard derivative of a function with bounded variation. Nevertheless, it remains true that $v\cdot u_{x}\geq 0$ in the sense of measures (note that for almost every $t\geq 0$ the function $v$ is continuous in $x$, and $u_{x}$ is a signed measure).

This leads us to introducing a class of evolution curves that we call $uv$-evolutions (see Definition~\ref{defn:uv-NBC}), namely solutions to an evolution equation of the form $u_{t}=v_{x}$, with suitable regularity requirements and the sign condition $v\cdot u_{x}\geq 0$. Then we prove that all our generalized solutions of the Perona-Malik equation are actually $uv$-evolutions (see Proposition~\ref{prop:joining-link}), and that all $uv$-evolutions satisfy the maximum principle and the monotonicity of the total variation (see Proposition~\ref{prop:uv-1}).

The class of $uv$-evolutions contains weak solutions to many parabolic equations in divergence form, including all weak solutions to the Perona-Malik equation constructed through convex integration techniques (see~\cite{2006-JDE-Zhang,2015-SIAM-KimYan}, or the more recent~\cite{2018-Nonlinearity-KimYan} and the references quoted therein). The class of $uv$-evolutions is stable under weak convergence, and therefore we are confident that it could contain also the limits of trajectories provided by different approximations of the Perona-Malik equation, of course when a compactness result will be available for those models. In all these cases we have now identified a common mechanism that leads to the maximum principle and to the monotonicity of the total variation, and we hope that this could be useful also in different contexts.

Finally, we conclude by mentioning a technical point in our proofs that might be interesting in itself. Indeed, the maximum principle and the monotonicity of the total variation seem to be two faces of the same coin. More precisely, in order to show that the total variation of $u(t,x)$ with respect to the space variable $x\in(a,b)$ is a nonincreasing function of time, we consider for every positive integer $m$ the function with $2m$ space variables
\begin{equation}
\sum_{i=1}^{2m}(-1)^{i}u(t,x_{i}),
\nonumber
\end{equation}
defined in the simplex $a<x_{1}\leq x_{2}\leq\ldots\leq x_{2m}<b$, and we show that its supremum is a nonincreasing function of time. In other words, the monotonicity of the total variation in dimension one can be reduced to a maximum principle in higher dimension.

\paragraph{\textmd{\textit{Structure of the paper}}}

This paper is organized as follows. In section~\ref{sec:statements} we introduce the notation and we state our main results. In section~\ref{sec:no-strict} we construct our counterexample to the preservation of strict convergence for positive times. In section~\ref{sec:uv} we develop the theory of $uv$-evolutions that leads to the proof of our monotonicity results.


\setcounter{equation}{0}
\section{Notations and statements}\label{sec:statements}

For the sake of simplicity, throughout this paper we consider a function $\varphi:\re\to\re$ with the following properties, even if many statements could be proved also under weaker assumptions.
\begin{itemize}

\item  (Regularity, symmetry, strict convexity in the origin). The function $\varphi$ is an even function of class $C^{2}$ with bounded second derivative, satisfying $\varphi(0)=0$ and
\begin{equation}
\varphi''(0)>0.
\label{hp:phi''(0)}
\end{equation}

\item  (Convex-concavity). There exists a positive real number $\sigma_{1}$ such that
\begin{equation}
\varphi'(\sigma) \text{ is nondecreasing in $[0,\sigma_{1}]$}
\label{hp:phi'-convex}
\end{equation}
and
\begin{equation}
\varphi'(\sigma) \text{ is nonincreasing in $[\sigma_{1},+\infty)$}.
\label{hp:phi'-concave}
\end{equation}

\item  (Sublinear growth). It turns out that
\begin{equation}
\lim_{\sigma\to +\infty}\varphi'(\sigma)=0.
\label{hp:phi'-lim}
\end{equation}

\end{itemize}

The typical examples are the functions in (\ref{defn:varphi}) and (\ref{defn:varphi-more}). We observe that these properties imply in particular that $\varphi'(0)=0$ and $\varphi(\sigma)>0$ for every $\sigma\neq 0$. In addition, $\sigma_{1}$ is a maximum point for $\varphi'(\sigma)$ and
\begin{equation}
\sigma\varphi'(\sigma)\geq 0
\qquad
\forall\sigma\in\re.
\label{hp:svarphi'}
\end{equation}

In the sequel, $BV((a,b))$ denotes the space of functions with bounded variation in an interval $(a,b)\subseteq\re$. For every function $u$ in this space, $Du$ denotes its distributional derivative, which is a signed measure. Every function $u\in BV((a,b))$ coincides almost everywhere with a function that is (for example) left-continuous. We always identify $u$ with this left-continuous representative.

The \emph{total variation of $u$ in $(a,b)$} is the nonnegative real number
\begin{equation}
\TV(u):=\sup\left\{\sum_{i=1}^{m}|u(x_{i})-u(x_{i-1})|: m\geq 1,\ a<x_{0}\leq x_{1}\leq\ldots\leq x_{m}<b\right\},
\nonumber
\end{equation}
and coincides with the total variation of the measure $Du$. The total variation of $u$ is the sum of the \emph{positive total variation} $\TV^{+}(u)$ of $u$, defined by
\begin{equation}
\TV^{+}(u):=\sup\left\{\sum_{i=1}^{2m}(-1)^{i}u(x_{i}): m\geq 1,\ a<x_{1}\leq x_{2}\leq\ldots\leq x_{2m}<b\right\},
\nonumber
\end{equation}
and the \emph{negative total variation} $\TV^{-}(u)$ of $u$, defined as $\TV^{+}(-u)$. The decomposition $\TV(u)=\TV^{+}(u)+\TV^{-}(u)$ corresponds to the Hahn decomposition of $Du$.

\subsection{The semi-discrete scheme}

Let $n$ be a positive integer, and let $u:\{1,\ldots,n\}\to\re$ be any function. Let us extend $u$ by setting
\begin{equation}
u(0)=u(1)
\qquad\text{and}\qquad
u(n+1)=u(n).
\nonumber
\end{equation}

We call \emph{forward discrete derivative} of $u$ with step $1/n$ the function $D_{n}^{+}u$ defined as
\begin{equation}
[D_{n}^{+}u](i):=\frac{u(i+1)-u(i)}{1/n}
\qquad
\forall i\in\{0,1,\ldots,n\},
\nonumber
\end{equation}
and we call \emph{backward discrete derivative} of $u$ with step $1/n$ the function $D_{n}^{-}u$ defined as
\begin{equation}
[D_{n}^{-}u](i):=\frac{u(i-1)-u(i)}{-1/n}
\qquad
\forall i\in\{1,\ldots,n,n+1\}.
\nonumber
\end{equation}

We observe that $[D_{n}^{-}u](i)=[D_{n}^{+}u](i-1)$ for every admissible value of $i$. In analogy with the continuous setting, we set 
\begin{equation}
\|u\|_{\infty}:=\max\{|u(i)|:i\in\{1,\ldots,n\}\},
\label{defn:D-infty}
\end{equation}
and we call discrete total variation of $u$ the number
\begin{equation}
\TV(u):=\frac{1}{n}\sum_{i=1}^{n}|D_{n}^{+}u(i)|=
\sum_{i=1}^{n-1}|u(i+1)-u(i)|.
\label{defn:D-TV}
\end{equation}

A solution to the \emph{semi-discrete Perona-Malik equation} with step $1/n$ is any function
\begin{equation}
u:[0,+\infty)\times\{1,\ldots,n\}\to\re
\label{defn:u-discr}
\end{equation}
that is differentiable with respect to the first variable, and satisfies
\begin{equation}
u'(t,i)=D_{n}^{-}\left(\varphi'\left(D_{n}^{+}u(t,i)\right)\right)
\qquad
\forall t\geq 0,
\quad
\forall i\in\{1,\ldots,n\},
\label{eqn:DPM}
\end{equation}
with the usual understanding that
\begin{equation}
u(t,0)=u(t,1)
\qquad\text{and}\qquad
u(t,n+1)=u(t,n)
\qquad
\forall t\geq 0.
\label{eqn:DNBC}
\end{equation}

Just to avoid any ambiguity, we point out that
\begin{equation}
D_{n}^{-}\left(\varphi'\left(D_{n}^{+}u(t,i)\right)\right)=
\frac{\varphi'\left(D_{n}^{+}u(t,i)\right)-\varphi'\left(D_{n}^{+}u(t,i-1)\right)}{1/n},
\nonumber
\end{equation}
so that equation (\ref{eqn:DPM}) is the discrete version of (\ref{defn:PM-eqn-phi}), and (\ref{eqn:DNBC}) is the discrete version of the Neumann boundary conditions.

\begin{rmk}[From discrete to continuum setting]\label{rmk:discr2cont}
\begin{em}

One can always identify a function $u:\{1,\ldots,n\}\to\re$ with the piecewise constant function $\hatu:(0,1)\to\re$ defined by
\begin{equation}
\hatu(x):=u(\lceil nx\rceil)
\qquad
\forall x\in(0,1),
\nonumber
\end{equation}
where, for every real number $\alpha$, the symbol $\lceil\alpha\rceil$ denotes the smallest integer greater than or equal to $\alpha$. Under this identification, the forward and backward discrete derivatives $D_{n}^{+}u$ and $D_{n}^{-}u$ are actually the forward and backward difference quotients of $\hatu$ of step $1/n$, the number $\|u\|_{\infty}$ defined by (\ref{defn:D-infty}) is the usual $L^{\infty}$ norm of $\hatu$, and the discrete total variation $\TV(u)$ defined by (\ref{defn:D-TV}) is the total variation of $\hatu$ as a bounded variation function.

In an analogous way, one can always associate to every semi-discrete function $u$ as in (\ref{defn:u-discr}) the function $\hatu:[0,+\infty)\times(0,1)\to\re$ defined by
\begin{equation}
\hatu(t,x):=u(t,\lceil nx\rceil)
\qquad
\forall t\geq 0,
\quad
\forall x\in(0,1).
\label{defn:PCu}
\end{equation}

\end{em}
\end{rmk}

In the sequel, with some abuse of notation, we use the same letter $u$ in order to denote the ``same function'' in three different flavors, namely
\begin{itemize}

\item  the discrete function with domain and codomain as in (\ref{defn:u-discr}),

\item  the piecewise constant function defined by the right-hand side of (\ref{defn:PCu}),

\item  the evolution curve from $[0,+\infty)$ to $L^{p}((0,1))$ that associates to each $t\geq 0$ the function $x\mapsto u(t,\lceil nx\rceil)$, thought as an element of $L^{p}((0,1))$. 

\end{itemize}

We hope that this abuse of notation could result less confusing than using three different symbols. 

\begin{rmk}[Discrete Perona-Malik functional]
\begin{em}

One can define the \emph{discrete Perona-Malik functional} with step $1/n$ as
\begin{equation}
\DPM_{n}(u):=\frac{1}{n}\sum_{i=1}^{n}\varphi\left(D_{n}^{+}u(i)\right).
\label{defn:DPM}
\end{equation}

Under the identification of Remark~\ref{rmk:discr2cont}, one can interpret it as a functional defined in the space of piecewise constant functions in $(0,1)$ with steps of width $1/n$. In this sense it turns out that the semi-discrete Perona-Malik equation (\ref{eqn:DPM}), with the discrete Neumann boundary conditions (\ref{eqn:DNBC}), is the gradient flow of (\ref{defn:DPM}) with respect to the metric of $L^{2}((0,1))$. For more details we refer to~\cite{GG:grad-est}. 

\end{em}
\end{rmk}


We observe that (\ref{eqn:DPM}) is actually a system of $n$ ordinary differential equations, and therefore existence and uniqueness of solutions follow from standard theories. In the next result we summarize the properties of solutions that we need in the sequel. 

\begin{thmbibl}[Existence, uniqueness, and monotonicity]\label{thmbibl:existence}

For every positive integer $n$, and every function $u_{0}:\{1,\ldots,n\}\to\re$, the following statements hold true.
\begin{enumerate}
\renewcommand{\labelenumi}{(\arabic{enumi})}

\item \emph{(Existence and uniqueness).} There exists a unique global solution to the semi-discrete Perona-Malik equation with initial datum $u_{0}$, namely there exists a unique function $u$ that satisfies (\ref{defn:u-discr}), (\ref{eqn:DPM}), (\ref{eqn:DNBC}), and the initial condition
\begin{equation}
u(0,i)=u_{0}(i)
\qquad
\forall i\in\{1,\ldots,n\}.
\nonumber
\end{equation}

\item \emph{(Monotonicity of max/min and total variation).} The three functions
\begin{equation}
t\mapsto\max_{1\leq i\leq n}u(t,i),
\qquad\quad
t\mapsto-\min_{1\leq i\leq n}u(t,i),
\qquad\quad
t\mapsto\frac{1}{n}\sum_{i=1}^{n}|D_{n}^{+}u(t,i)|
\nonumber
\end{equation}
are nonincreasing.

\item  \emph{($L^{2}$ estimate on the time-derivative).} It turns out that
\begin{equation}
\int_{0}^{+\infty}\left(\frac{1}{n}\sum_{i=1}^{n}|u'(t,i)|^{2}\right)\,dt\leq \DPM_{n}(u_{0}),
\label{th:est-ut-DPM}
\end{equation}
where $\DPM_{n}$ is the discrete Perona-Malik functional defined in (\ref{defn:DPM}).

\item  \emph{(Preservation of subcritical regions).} Let $\sigma_{1}$ be the threshold that appears in (\ref{hp:phi'-convex}) and (\ref{hp:phi'-concave}). If $D_{n}^{+}u_{0}(i)\leq\sigma_{1}$ for some index $i$, then $D_{n}^{+}u(t,i)\leq\sigma_{1}$ for every $t\geq 0$. 

\item  \emph{(Preservation of monotonicity).} If the initial datum $u_{0}$ is nondecreasing, then $u(t,i)$ is nondecreasing with respect to the second variable for every $t\geq 0$.

\end{enumerate}

\end{thmbibl}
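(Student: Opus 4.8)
The plan is to prove the five assertions one at a time; (2) is the substantive one, (1) and (3) are soft, and (4), (5) are discrete maximum principles. \emph{For (1):} since $\varphi$ has bounded second derivative, $\varphi'$ is globally Lipschitz on $\re$; as $D_{n}^{+}$ and $D_{n}^{-}$ are linear, after eliminating $u(t,0)$ and $u(t,n+1)$ via (\ref{eqn:DNBC}) the right-hand side of (\ref{eqn:DPM}) is a globally Lipschitz vector field on $\re^{n}$, so the Cauchy problem has a unique global solution by Cauchy--Lipschitz, and each $t\mapsto u(t,i)$ is of class $C^{1}$ (indeed $C^{2}$).

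\emph{For (2):} for the maximum principle I would consider $M(t):=\max_{i}u(t,i)$, a locally Lipschitz function whose upper right Dini derivative at $t$ equals $\max\{u'(t,i):u(t,i)=M(t)\}$. If the index $i$ realizes the maximum at time $t$, then $D_{n}^{+}u(t,i)\le 0\le D_{n}^{+}u(t,i-1)$ (with the boundary conventions (\ref{eqn:DNBC}) when $i\in\{1,n\}$), hence by (\ref{hp:svarphi'}) and oddness of $\varphi'$ we get $\varphi'(D_{n}^{+}u(t,i))\le 0\le\varphi'(D_{n}^{+}u(t,i-1))$, so $u'(t,i)=n(\varphi'(D_{n}^{+}u(t,i))-\varphi'(D_{n}^{+}u(t,i-1)))\le 0$; thus $M$ is nonincreasing, and the claim for $\min_{i}u(t,i)$ follows by applying this to $-u$, which solves the same equation because $\varphi$ is even. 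For the total variation I would set $w(t,i):=D_{n}^{+}u(t,i)$ (so $w(t,0)=w(t,n)=0$ and $\frac1n\sum_{i}|w(t,i)|=\TV(u(t))$) and differentiate (\ref{eqn:DPM}) to obtain, for interior $i$, the discrete equation $w'(t,i)=n^{2}(\psi(t,i+1)-2\psi(t,i)+\psi(t,i-1))$ with $\psi:=\varphi'(w)$ and $\psi(t,0)=\psi(t,n)=0$. At almost every $t$ the sum $\sum_{i}|w(t,i)|$ is differentiable with derivative $\sum_{i}\sign(w(t,i))\,w'(t,i)$ (convention $\sign(0)=0$), and a discrete summation by parts turns this, up to the positive factor $n^{2}$, into
\begin{multline*}
-|\varphi'(w(t,1))|-|\varphi'(w(t,n-1))|\\
{}+\sum_{i=1}^{n-2}\bigl(\sign(w(t,i))-\sign(w(t,i+1))\bigr)\bigl(\varphi'(w(t,i+1))-\varphi'(w(t,i))\bigr),
\end{multline*}
which is $\le 0$: the boundary terms by (\ref{hp:svarphi'}), and each summand because $(\sign(a)-\sign(b))(\varphi'(b)-\varphi'(a))\le 0$ for all $a,b\in\re$ — a short case check using only that $\varphi'$ is odd and has the same sign as its argument. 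Hence $\TV(u(t))$ is nonincreasing.

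\emph{For (3):} I would differentiate $\DPM_{n}(u(t))=\frac1n\sum_{i}\varphi(D_{n}^{+}u(t,i))$ along the flow and perform an Abel summation; the boundary terms vanish since $\varphi'(D_{n}^{+}u(t,0))=\varphi'(D_{n}^{+}u(t,n))=\varphi'(0)=0$, and recognizing $\varphi'(D_{n}^{+}u(t,i))-\varphi'(D_{n}^{+}u(t,i-1))=\frac1n u'(t,i)$ from (\ref{eqn:DPM}) yields $\frac{d}{dt}\DPM_{n}(u(t))=-\frac1n\sum_{i}|u'(t,i)|^{2}$. Since $\DPM_{n}\ge 0$, the left-hand side is nonincreasing and bounded below, and integrating on $[0,+\infty)$ gives (\ref{th:est-ut-DPM}).

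\emph{For (4) and (5):} both are discrete maximum principles for the equation satisfied by $w(t,i)=D_{n}^{+}u(t,i)$ derived above. For (5) the relevant structure is that $w\equiv 0$ is a stationary solution and $\varphi'\ge 0$ on $[0,+\infty)$, so a nonnegative initial gradient stays nonnegative; for (4) it is that $\sigma_{1}$ is the maximum point of $\varphi'$ (by (\ref{hp:phi'-convex})--(\ref{hp:phi'-concave})), so $\sigma_{1}$ acts as a barrier which $w$ cannot cross from below. In each case one argues at the first crossing time: there the relevant index has $\psi=\varphi'(w)$ at an extreme value, so the discrete Laplacian of $\psi$ has the sign that forces $w'$ to point the ``wrong'' way, and a contiguity propagation toward the boundary indices (where $w\equiv 0$) yields a contradiction. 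The hard part, and where I expect the real work to be, is precisely here: the $w$-equation is itself forward--backward, so no off-the-shelf comparison principle is available and the first-crossing argument must be closed with care (possibly via a perturbation of $w$); together with the sign bookkeeping in the summation by parts of (2), this is the technical core, for which one can follow \cite{GG:grad-est,2008-JDE-BNPT}.
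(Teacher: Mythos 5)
The paper does not actually prove Theorem~\ref{thmbibl:existence}: statements (1)--(4) are quoted from \cite[Theorem~2.5]{GG:grad-est}, and (5) is reduced, in the sentence following the statement, to the monotonicity of $t\mapsto\frac{1}{n}\sum_{i}\max\{-D_{n}^{+}u(t,i),0\}$, ``whose proof is analogous to the monotonicity of the total variation''. With that understood, your arguments for (1), (2) and (3) are correct and complete, and they reconstruct the standard proofs: a globally Lipschitz vector field for (1); the Dini-derivative formula at maximizing indices together with (\ref{hp:svarphi'}) for the max/min; the equation $w'(t,i)=n^{2}(\psi(t,i+1)-2\psi(t,i)+\psi(t,i-1))$ for $w=D_{n}^{+}u$, $\psi=\varphi'(w)$, followed by Abel summation and the elementary inequality $(\sign(a)-\sign(b))(\varphi'(b)-\varphi'(a))\leq 0$ for the total variation; and the gradient-flow energy identity for (3). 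The sign bookkeeping in your summation by parts checks out, including the boundary terms $-|\varphi'(w(t,1))|-|\varphi'(w(t,n-1))|$ coming from $\psi(t,0)=\psi(t,n)=0$.

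Where you stop short is (4) and (5), and your concern about the forward--backward nature of the $w$-equation is misplaced for these two statements: neither requires a comparison principle, a first-crossing analysis, nor any ``contiguity propagation''. For (5), the efficient route (the one the paper points to) is literally your own total-variation computation run again with $\sign(w(t,i))$ replaced by the function equal to $-1$ where $w(t,i)<0$ and to $0$ elsewhere: the same summation by parts, with the same case-by-case sign check, shows that $t\mapsto\frac{1}{n}\sum_{i}\max\{-w(t,i),0\}$ is nonincreasing; since it vanishes at $t=0$ and is nonnegative, it vanishes for all $t$, i.e.\ $w\geq 0$. For (4), which concerns one fixed index $i$, note that (\ref{hp:phi'-convex})--(\ref{hp:phi'-concave}) and oddness give $\varphi'(\sigma)\leq\varphi'(\sigma_{1})$ for \emph{every} $\sigma\in\re$, so that $w'(t,i)\leq 2n^{2}\left(\varphi'(\sigma_{1})-\varphi'(w(t,i))\right)$ for all $t$. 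The right-hand side is a Lipschitz function of $w(t,i)$ vanishing at $\sigma_{1}$, so the scalar comparison theorem against the constant solution $y\equiv\sigma_{1}$ of $y'=2n^{2}(\varphi'(\sigma_{1})-\varphi'(y))$ yields $w(t,i)\leq\sigma_{1}$ whenever $w(0,i)\leq\sigma_{1}$; no perturbation of $w$ is needed. With these two closures your proposal becomes a complete proof of the theorem.
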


The proof of the first four statements of Theorem~\ref{thmbibl:existence} is contained in~\cite[Theorem~2.5]{GG:grad-est}, while statement~(5) follows from the monotonicity of the function
\begin{equation}
t\mapsto\frac{1}{n}\sum_{i=1}^{n}\max\left\{-D_{n}^{+}u(t,i),0\right\},
\nonumber
\end{equation}
whose proof is analogous to the monotonicity of the total variation.


Solutions to the semi-discrete Perona-Malik equation satisfy the following compactness result. 

\begin{thmbibl}[Compactness for the semi-discrete scheme]\label{thmbibl:compactness}

For every positive integer $n$, let $u_{0n}:\{1,\ldots,n\}\to\re$ be a function, and let $u_{n}:[0,+\infty)\times\{1,\ldots,n\}\to\re$ denote the solution to the semi-discrete Perona-Malik equation (\ref{eqn:DPM}), with discrete Neumann boundary conditions (\ref{eqn:DNBC}), and initial datum $u_{0n}$.

Let us assume that 
\begin{equation}
\sup\left\{\|u_{0n}\|_{\infty}+\TV(u_{0n}):n\geq 1\strut\right\}<+\infty.
\label{hp:bound-data}
\end{equation}

Then the sequence $\{u_{n}\}$ is relatively compact in $C^{0}\left([0,+\infty);L^{2}((0,1))\strut\right)$ with respect to the compact-open topology, namely there exist a function $u:[0,+\infty)\to L^{2}((0,1))$, and an increasing sequence $\{n_{k}\}$ of positive integers, such that 
\begin{equation}
\lim_{k\to +\infty}\sup_{t\in[0,T]}
\|u_{n_{k}}(t,\lceil n_{k}x\rceil)-u(t,x)\|_{L^{2}((0,1))}=0
\label{th:un2u}
\end{equation}
for every real number $T>0$.

\end{thmbibl}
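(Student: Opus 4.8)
The plan is to run a compactness argument of Arzel\`a--Ascoli type in $C^{0}([0,T];L^{2}((0,1)))$ and then pass to $[0,+\infty)$ by a diagonal extraction, feeding in the two non-trivial estimates already granted by Theorem~\ref{thmbibl:existence}: the monotonicity of the $L^{\infty}$ norm and of the total variation (which gives spatial compactness, uniformly in time) and the $L^{2}$ bound on the time derivative (which gives equicontinuity in time, uniformly in $n$). Throughout, I would use the identification of Remark~\ref{rmk:discr2cont}: $u_{n}(t,\cdot)$ is the piecewise constant function given by the right-hand side of (\ref{defn:PCu}), whose $L^{\infty}$ norm and total variation are the discrete quantities (\ref{defn:D-infty}) and (\ref{defn:D-TV}), and one has $\|u_{n}(t,\cdot)-u_{n}(s,\cdot)\|_{L^{2}((0,1))}^{2}=\frac1n\sum_{i=1}^{n}|u_{n}(t,i)-u_{n}(s,i)|^{2}$.

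\emph{Spatial compactness.} By statement~(2) of Theorem~\ref{thmbibl:existence} and assumption~(\ref{hp:bound-data}) there is a constant $C_{0}$ with $\|u_{n}(t,\cdot)\|_{\infty}+\TV(u_{n}(t,\cdot))\leq C_{0}$ for every $n\geq 1$ and every $t\geq 0$. Since bounded subsets of $BV((0,1))\cap L^{\infty}((0,1))$ are relatively compact in $L^{2}((0,1))$ (Helly's selection theorem, together with dominated convergence to upgrade pointwise convergence to $L^{2}$ convergence, using the uniform $L^{\infty}$ bound), the whole set $\{u_{n}(t,\cdot):n\geq 1,\ t\geq 0\}$ is contained in a fixed compact subset $K\subseteq L^{2}((0,1))$.

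\emph{Equicontinuity in time.} First I would observe that $\DPM_{n}(u_{0n})$ is bounded uniformly in $n$: from $\varphi(0)=0$, the $C^{2}$ regularity of $\varphi$, and (\ref{hp:phi'-lim}) it follows that $\varphi(\sigma)/|\sigma|$ is bounded on $\re\setminus\{0\}$ by some constant $C_{1}$, hence $\varphi(\sigma)\leq C_{1}|\sigma|$ for every $\sigma$, and therefore by (\ref{defn:DPM}) and (\ref{defn:D-TV})
\begin{equation}
\DPM_{n}(u_{0n})=\frac1n\sum_{i=1}^{n}\varphi(D_{n}^{+}u_{0n}(i))\leq\frac{C_{1}}{n}\sum_{i=1}^{n}|D_{n}^{+}u_{0n}(i)|=C_{1}\TV(u_{0n})\leq C_{1}C_{0}.
\nonumber
\end{equation}
Since the right-hand side of (\ref{eqn:DPM}) is a $C^{1}$ function of the unknowns, each $u_{n}$ is of class $C^{1}$ in $t$, so for every coordinate $i$ we have $u_{n}(t,i)-u_{n}(s,i)=\int_{s}^{t}u_{n}'(\tau,i)\,d\tau$; by the Cauchy--Schwarz inequality, summing over $i$, dividing by $n$, and using (\ref{th:est-ut-DPM}),
\begin{equation}
\|u_{n}(t,\cdot)-u_{n}(s,\cdot)\|_{L^{2}((0,1))}^{2}\leq(t-s)\int_{s}^{t}\frac1n\sum_{i=1}^{n}|u_{n}'(\tau,i)|^{2}\,d\tau\leq(t-s)\,\DPM_{n}(u_{0n})\leq C_{1}C_{0}\,(t-s)
\nonumber
\end{equation}
for all $0\leq s\leq t$. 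Hence the family $\{u_{n}\}_{n\geq 1}$ is uniformly $1/2$-Hölder continuous from $[0,+\infty)$ to $L^{2}((0,1))$, in particular equicontinuous.

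\emph{Extraction.} For each integer $T\geq 1$, the two steps above put $\{u_{n}\}$ in the hypotheses of the Arzel\`a--Ascoli theorem in $C^{0}([0,T];L^{2}((0,1)))$ (equicontinuity, and relative compactness of $\{u_{n}(t,\cdot)\}$ in $L^{2}((0,1))$ for each $t$, which holds since this set lies in $K$), so from any subsequence one can extract a further subsequence converging uniformly on $[0,T]$; iterating over $T=1,2,3,\ldots$ and taking a diagonal subsequence $\{n_{k}\}$ produces a limit $u\in C^{0}([0,+\infty);L^{2}((0,1)))$ for which (\ref{th:un2u}) holds for every $T>0$ (and which incidentally inherits the bounds $\|u(t,\cdot)\|_{\infty}\leq C_{0}$, $\TV(u(t,\cdot))\leq C_{0}$ and the $1/2$-Hölder continuity in time). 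I do not expect any real obstacle here, since the two estimates carrying the PDE content are assumed known from Theorem~\ref{thmbibl:existence} and the rest is a standard compactness scheme; the only points deserving attention are the elementary linear bound $\varphi(\sigma)\leq C_{1}|\sigma|$ used to make $\DPM_{n}(u_{0n})$ uniformly bounded, and being careful to keep the compactness in the spatial step uniform with respect to $n$ and $t$ at the same time, which is exactly what licenses the application of Arzel\`a--Ascoli.
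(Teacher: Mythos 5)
Your proof is correct and follows exactly the route the paper indicates in the remark after the theorem: spatial compactness for each fixed $t$ from the uniform $L^{\infty}$ and total variation bounds (via the monotonicity in statement~(2) of Theorem~\ref{thmbibl:existence}), uniform $1/2$-H\"older continuity in time from (\ref{th:est-ut-DPM}) together with the linear bound $\varphi(\sigma)\leq C_{1}|\sigma|$ coming from the sublinear growth of $\varphi$, and then Arzel\`a--Ascoli with a diagonal extraction over $T=1,2,\ldots$. Nothing is missing.
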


\begin{rmk}
\begin{em}

The proof of Theorem~\ref{thmbibl:compactness}, for which we refer to~\cite[Theorem~2.7]{GG:grad-est} (see also~\cite{2008-JDE-BNPT}), is a simple application of the classical Arzelà-Ascoli theorem. The two main ingredients are the following.
\begin{itemize}

\item  For every fixed $t\geq 0$, the sequence $\{u_{n}(t,x)\}$ is relatively compact in $L^{2}((0,1))$ due to the maximum principle and the bound on the total variation.

\item  The sequence $u_{n}$ is uniformly H\"older continuous with exponent 1/2 as a function from $[0,+\infty)$ to $L^{2}((0,1))$. This is due to estimate (\ref{th:est-ut-DPM}), and the fact that (\ref{hp:bound-data}) implies a uniform bound on $\DPM_{n}(u_{0n})$ because of the sublinear growth of $\varphi$.
\end{itemize} 

We observe also that in the compactness statement the $L^{2}$ space can be replaced by any $L^{p}$ space with finite $p\geq 1$ (but not with $p=+\infty$).
\end{em}
\end{rmk}


\subsection{Generalized solutions obtained through the SD scheme}

The compactness result of Theorem~\ref{thmbibl:compactness} motivates the following procedure. Given any function $u_{0}\in BV((0,1))$, we approximate it with a sequence $\{u_{0n}\}$ of piecewise constant functions with step $1/n$. For each positive integer $n$, we consider the solution $u_{n}$ to the semi-discrete Perona-Malik equation with initial datum $u_{0n}$. Since $u_{0}$ is a bounded variation function, we can choose the approximating sequence in such a way that (\ref{hp:bound-data}) holds true, and this guarantees that the sequence $\{u_{n}\}$ is relatively compact. All possible limits, when also the approximating sequence is allowed to vary, can be considered as some sort of ``generalized solutions'' to (\ref{defn:PM-eqn-phi}) with initial datum $u_{0}$. 

We observe that this procedure, when applied to the heat equation, or to any other forward parabolic equation, delivers the unique classical solution to the equation with initial datum $u_{0}$. In the case of the Perona-Malik equation we end up with the following notion.

\begin{defn}[Generalized solutions]\label{defn:gen-sol}
\begin{em}

A generalized solution to equation (\ref{defn:PM-eqn-phi}) with homogeneous Neumann boundary conditions in the interval $(0,1)$, obtained through semi-discrete approximation, is any function $u\in C^{0}\left([0,+\infty);L^{2}((0,1))\strut\right)$ for which there exist an increasing sequence $\{n_{k}\}$ of positive integers, and a sequence of functions
\begin{equation}
u_{k}:[0,+\infty)\times\{1,\ldots,n_{k}\}\to\re,
\nonumber
\end{equation}
with the following properties. 
\begin{itemize}

\item  (Uniform bounds on initial data). There exists a real number $M$ such that the initial data, defined by $u_{0k}(i)=u_{k}(0,i)$ for every $i\in\{1,\ldots,n_{k}\}$, satisfy 
\begin{equation}
\|u_{0k}\|_{\infty}+\TV(u_{0k})\leq M
\qquad
\forall k\geq 1.
\label{hp:bound-data-k}
\end{equation}

\item  (Semi-discrete equation and discrete Neumann boundary conditions). For every positive integer $k$, the function $u_{k}$ is differentiable with respect to the first variable, and satisfies the semi-discrete Perona-Malik equation
\begin{equation}
u_{k}'(t,i)=D_{n_{k}}^{-}\left(\varphi'\left(D_{n_{k}}^{+}u_{k}(t,i)\right)\right)
\qquad
\forall t\geq 0,
\quad
\forall i\in\{1,\ldots,n_{k}\},
\nonumber
\end{equation}
with the usual understanding (discrete Neumann boundary conditions) that
\begin{equation}
u_{k}(t,0)=u_{k}(t,1)
\qquad\text{and}\qquad
u_{k}(t,n_{k}+1)=u_{k}(t,n_{k})
\qquad
\forall t\geq 0.
\label{eqn:DNBC-k}
\end{equation}

\item  (Convergence). As $k\to +\infty$ it turns out that
\begin{equation}
u_{k}(t,\lfloor n_{k}x\rfloor)\to u(t,x)
\qquad
\text{in }C^{0}\left([0,+\infty);L^{2}((0,1))\right)
\nonumber
\end{equation}
in the sense that (\ref{th:un2u}) holds true for every real number $T>0$.

\end{itemize}

\end{em}
\end{defn}

\begin{rmk}[Restriction property]
\begin{em}

The notion of solution of Definition~\ref{defn:gen-sol} above is slightly more general than the notion of solution of~\cite[Definition~2.8]{GG:grad-est}. Indeed, in the latter there was the further requirement that initial data converge strictly in $BV((0,1))$, namely that the total variation of the initial data $u_{0k}$ of the approximating solutions converges to the total variation of the initial datum $u(0,x)$ of the limit solution. 

Dropping this extra requirement potentially enlarges the set of generalized solutions, but it has the positive effect that now, if we restrict a solution with initial datum $u_{0}$ to some half-line $[T,+\infty)$, what we get is a solution with initial datum $u(T)$.

\end{em}
\end{rmk}

A partial characterization of generalized solutions is provided by the following result. For a proof, we refer to~\cite[Theorem~2.9]{GG:grad-est}. 

\begin{thmbibl}[Regularity of generalized solution]\label{thmbibl:v}

Let $u$ be a generalized solution to equation (\ref{defn:PM-eqn-phi}) with homogeneous Neumann boundary conditions in the interval $(0,1)$, obtained through semi-discrete approximation in the sense of Definition~\ref{defn:gen-sol} with corresponding sequences $\{n_{k}\}$ and $\{u_{k}\}$.

Then the following statements hold true.

\begin{enumerate}
\renewcommand{\labelenumi}{(\arabic{enumi})}

\item  \emph{($H^{1}$ regularity in time).} The function $u$ admits a weak derivative $u_{t}$ with respect to the variable $t$, and
$$u_{t}\in L^{2}((0,+\infty)\times(0,1)).$$

\item  \emph{($BV$ regularity in space).}  For every $t\geq 0$ the function $x\mapsto u(t,x)$ belongs to $BV((0,1))$ and
\begin{equation}
\|u(t,x)\|_{\infty}+\TV(u(t,x))\leq M
\qquad
\forall t\geq 0,
\nonumber
\end{equation}
where $M$ is the constant that appears in (\ref{hp:bound-data-k}), and both the $L^{\infty}$ norm and the total variation are intended with respect to the space variable.

\item  \emph{(Remnants of the equation).} Let us consider the function $v_{k}$ defined by 
\begin{equation}
v_{k}(t,i):=\varphi'\left(D_{n_{k}}^{+}u_{k}(t,i)\right)
\qquad
\forall t\geq 0,
\quad
\forall i\in\{1,\ldots,n_{k}\}.
\label{defn:vk}
\end{equation} 

Then there exists a measurable function $v\in L^{\infty}((0,+\infty)\times(0,1))$ such that
\begin{equation}
v_{k}(t,\lceil n_{k}x\rceil)\rightharpoonup v(t,x)
\qquad
\text{weakly* in }L^{\infty}((0,+\infty)\times(0,1)).
\nonumber
\end{equation}

Moreover, the function $v$ admits a weak derivative $v_{x}$ with respect to the space variable $x$. This derivative satisfies
\begin{equation}
D_{n_{k}}^{-}v_{k}(t,\lceil n_{k}x\rceil)\rightharpoonup v_{x}(t,x)
\qquad
\text{weakly in }L^{2}((0,+\infty)\times(0,1)),
\nonumber
\end{equation}
and
\begin{equation}
u_{t}=v_{x}
\qquad
\text{as elements of }L^{2}((0,+\infty)\times(0,1)).
\nonumber
\end{equation}

Finally, for almost every $t\geq 0$, the function $x\mapsto v(t,x)$ lies in $H^{1}((0,1))$, and hence it is continuous up to the endpoints and satisfies the boundary conditions $v(t,0)=v(t,1)=0$. 

\end{enumerate}

\end{thmbibl}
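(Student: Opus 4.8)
The plan is to transfer the discrete estimates of Theorem~\ref{thmbibl:existence} to the continuum via the piecewise constant identification of Remark~\ref{rmk:discr2cont}, using nothing beyond those estimates, the convergence in Definition~\ref{defn:gen-sol}, weak compactness, and one algebraic manipulation of the semi-discrete equation. Write $\hatu_{k}(t,x):=u_{k}(t,\lceil n_{k}x\rceil)$ and $\hatv_{k}(t,x):=v_{k}(t,\lceil n_{k}x\rceil)$, and let $L:=\sup_{\sigma\in\re}|\varphi'(\sigma)|$, which is finite because $\varphi'$ is continuous, even, and vanishes at infinity by (\ref{hp:phi'-lim}); since also $\varphi(0)=0$ we get $\varphi(\sigma)\leq L|\sigma|$, hence
\begin{equation}
\DPM_{n_{k}}(u_{0k})\leq L\cdot\TV(u_{0k})\leq LM
\qquad
\forall k\geq 1
\nonumber
\end{equation}
by (\ref{defn:DPM}), (\ref{defn:D-TV}) and (\ref{hp:bound-data-k}). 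This uniform bound is the engine behind everything.

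Statement~(2) is immediate: for each fixed $t$, statement~(2) of Theorem~\ref{thmbibl:existence} gives $\|\hatu_{k}(t,\cdot)\|_{\infty}\leq M$ and $\TV(\hatu_{k}(t,\cdot))\leq M$ for all $k$, and since $\hatu_{k}(t,\cdot)\to u(t,\cdot)$ in $L^{2}((0,1))$ by (\ref{th:un2u}), the bound on the sup norm passes to the limit along an a.e.\ convergent subsequence and the bound on the total variation passes to the limit by $L^{1}$-lower semicontinuity of $\TV$. For statement~(1), statement~(3) of Theorem~\ref{thmbibl:existence} says precisely that $\|\partial_{t}\hatu_{k}\|_{L^{2}((0,+\infty)\times(0,1))}^{2}\leq\DPM_{n_{k}}(u_{0k})\leq LM$, so $\{\partial_{t}\hatu_{k}\}$ is bounded in $L^{2}$; every weak limit of a subsequence is the distributional time-derivative of the strong limit $u$, so in fact $\partial_{t}\hatu_{k}\rightharpoonup u_{t}$ weakly in $L^{2}$ and $u_{t}\in L^{2}((0,+\infty)\times(0,1))$.

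Statement~(3) is the substantive part. The algebraic observation is that the equation telescopes: from $u_{k}'(t,i)=D_{n_{k}}^{-}v_{k}(t,i)=n_{k}\bigl(v_{k}(t,i)-v_{k}(t,i-1)\bigr)$ and the fact that the left Neumann condition forces $v_{k}(t,0)=\varphi'(0)=0$, we obtain $v_{k}(t,j)=\tfrac{1}{n_{k}}\sum_{i=1}^{j}u_{k}'(t,i)$ for every $j$, i.e.
\begin{equation}
\hatv_{k}(t,x)=\int_{0}^{c_{k}(x)}\partial_{t}\hatu_{k}(t,y)\,dy,
\qquad
c_{k}(x):=\frac{\lceil n_{k}x\rceil}{n_{k}}\in[x,x+1/n_{k}),
\nonumber
\end{equation}
while the right Neumann condition forces $v_{k}(t,n_{k})=\varphi'(0)=0$, that is $\int_{0}^{1}\partial_{t}\hatu_{k}(t,y)\,dy=0$ for every $t$ and $k$. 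Since $|\hatv_{k}|\leq L$ and, by Cauchy--Schwarz in the displayed formula, $\|\hatv_{k}(t,\cdot)\|_{L^{2}((0,1))}\leq\|\partial_{t}\hatu_{k}(t,\cdot)\|_{L^{2}((0,1))}$, the sequence $\{\hatv_{k}\}$ is bounded both in $L^{\infty}$ and in $L^{2}$ of the product space; combining the displayed identity with the weak convergence $\partial_{t}\hatu_{k}\rightharpoonup u_{t}$ (a Fubini computation, using $c_{k}(x)\to x$) identifies every subsequential weak limit as $v(t,x):=\int_{0}^{x}u_{t}(t,y)\,dy$, so $\hatv_{k}\rightharpoonup v$ weakly in $L^{2}$ and weak-$*$ in $L^{\infty}$. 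By construction $v$ has weak spatial derivative $v_{x}=u_{t}\in L^{2}$; and since $D_{n_{k}}^{-}v_{k}(t,i)=u_{k}'(t,i)$, the piecewise constant representative of $D_{n_{k}}^{-}v_{k}$ is $\partial_{t}\hatu_{k}\rightharpoonup u_{t}=v_{x}$ in $L^{2}$, which is the asserted convergence. Finally, by Fubini $u_{t}(t,\cdot)\in L^{2}((0,1))$ for a.e.\ $t$, so for such $t$ the function $v(t,\cdot)=\int_{0}^{\cdot}u_{t}(t,y)\,dy$ is absolutely continuous with $L^{2}$ derivative and bounded by $L$, hence belongs to $H^{1}((0,1))$ and is continuous on $[0,1]$; it satisfies $v(t,0)=0$ trivially, and $v(t,1)=\int_{0}^{1}u_{t}(t,y)\,dy=0$ for a.e.\ $t$, obtained by testing the identity $\int_{0}^{1}\partial_{t}\hatu_{k}(t,y)\,dy=0$ against an arbitrary $\psi\in C_{c}^{\infty}((0,+\infty))$ and letting $k\to+\infty$.

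The main obstacle is statement~(3), and within it the identification of the weak limit $v$ together with the boundary condition $v(t,1)=0$. Both hinge on the telescoping identity and, crucially, on the conservation of mass encoded in the discrete Neumann boundary conditions (the identity $\int_{0}^{1}\partial_{t}\hatu_{k}(t,\cdot)=0$), which is exactly what prevents a spurious flux from leaking out at the endpoints in the limit; without it one would recover $v_{x}=u_{t}$ only up to an additive function of time and with no boundary information. A secondary, purely clerical point that I have suppressed is the bookkeeping relating forward and backward difference quotients to their grid points (and the harmless mismatch between the $\lfloor\cdot\rfloor$ and $\lceil\cdot\rceil$ conventions in Definition~\ref{defn:gen-sol} and Remark~\ref{rmk:discr2cont}); none of this affects the limits.
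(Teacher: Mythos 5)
The paper does not prove Theorem~\ref{thmbibl:v} at all: it is quoted as background, with the proof deferred to \cite[Theorem~2.9]{GG:grad-est}. Judged on its own, your argument is correct and essentially complete, and it uses exactly the ingredients the paper's surrounding remarks point to: statement~(2) from the discrete monotonicity in Theorem~\ref{thmbibl:existence}(2) plus lower semicontinuity of $\|\cdot\|_{\infty}$ and $\TV$ under $L^{2}$ convergence; statement~(1) from the energy estimate (\ref{th:est-ut-DPM}) combined with the sublinear-growth bound $\DPM_{n_{k}}(u_{0k})\leq \varphi'(\sigma_{1})\,\TV(u_{0k})\leq LM$; statement~(3) from the uniform bound $|v_{k}|\leq\varphi'(\sigma_{1})$ together with the identity $D_{n_{k}}^{-}v_{k}=u_{k}'$.

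Where you genuinely depart from the standard presentation is in statement~(3). The usual route (the one in the cited reference) takes $v$ to be an abstract weak-$*$ limit obtained by compactness and then recovers $v_{x}=u_{t}$ and the boundary values by discrete summation by parts against test functions in $x$. You instead telescope the equation using $v_{k}(t,0)=\varphi'(0)=0$ to get the explicit primitive representation $v_{k}(t,j)=\tfrac{1}{n_{k}}\sum_{i\leq j}u_{k}'(t,i)$, i.e.\ $\hatv_{k}(t,x)=\int_{0}^{c_{k}(x)}\partial_{t}\hatu_{k}(t,y)\,dy$. This buys you three things at once: the limit is identified concretely as $v(t,x)=\int_{0}^{x}u_{t}(t,y)\,dy$ (so uniqueness of the subsequential limits, hence convergence of the whole sequence, is automatic); the relations $v_{x}=u_{t}$ and the $H^{1}$ regularity of the sections $x\mapsto v(t,x)$ become definitional rather than something to verify; and both boundary conditions reduce to the single discrete conservation law $\int_{0}^{1}\partial_{t}\hatu_{k}(t,y)\,dy=0$, which you correctly trace to the right-hand Neumann condition and pass to the limit by testing in time. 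The two approaches are mathematically equivalent (telescoping is summation by parts against an indicator), but yours is the more economical write-up. The only slips are cosmetic: $\varphi'$ is odd, not even (its boundedness follows from continuity and (\ref{hp:phi'-lim}) regardless), and for statement~(2) one should pass the \emph{sum} $\|\cdot\|_{\infty}+\TV(\cdot)$ to the limit, which your separate lower-semicontinuity arguments of course give.
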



\subsection{Main results}

The $L^{\infty}$ norm and the total variation are lower semicontinuous with respect to convergence in $L^{2}$, or more generally in $L^{p}$ with $p<+\infty$. As a consequence, with the notations of Definition~\ref{defn:gen-sol}, we know that
\begin{equation}
\liminf_{k\to +\infty}\|u_{k}(t,i)\|_{\infty}\geq\|u(t,x)\|_{\infty}
\qquad
\forall t\geq 0,
\label{est:liminf-infty}
\end{equation}
and
\begin{equation}
\liminf_{k\to +\infty}\TV(u_{k}(t,i))\geq \TV(u(t,x))
\qquad
\forall t\geq 0.
\label{est:liminf-TV}
\end{equation}

On the other hand, we know from statement~(2) of Theorem~\ref{thmbibl:existence} that, for every fixed $k\geq 1$, the functions $t\mapsto\|u_{k}(t,i)\|_{\infty}$ and $t\mapsto \TV(u_{k}(t,i))$ are nonincreasing. 

Unfortunately, the inequalities in (\ref{est:liminf-infty}) and (\ref{est:liminf-TV}) are not enough to deduce that the functions $t\mapsto\|u(t,x)\|_{\infty}$ and $t\mapsto \TV(u(t,x))$ are nonincreasing as well. This deduction would be possible if we had equalities instead of inequalities in (\ref{est:liminf-infty}) and (\ref{est:liminf-TV}). This would mean that, for every fixed $t\geq 0$, the convergence of $u_{k}(t,\lceil n_{k}x\rceil)$ to $u(t,x)$ could be improved from convergence in $L^{2}((0,1))$ to strict convergence in $BV((0,1))$ in the sense of~\cite[Definition~3.14]{AFP}.

The first main result of this paper is a counterexample to strict convergence. We show that it may happen that initial data converge strictly, but solutions do not converge strictly for every positive time, and as a consequence we have strict inequality in both (\ref{est:liminf-infty}) and (\ref{est:liminf-TV}) for every $t>0$.

\begin{thm}[Potential lack of strict convergence for positive times]\label{main:no-strict}

There exists a generalized solution $u$ to the Perona-Malik equation (\ref{defn:PM-eqn-phi}) with homogeneous Neumann boundary conditions in the interval $(0,1)$, obtained through semi-discrete approximation in the sense of Definition~\ref{defn:gen-sol} with corresponding sequences $\{n_{k}\}$ and $\{u_{k}\}$, which has the following properties.
\begin{enumerate}
\renewcommand{\labelenumi}{(\arabic{enumi})}

\item  \emph{(Strict converge of initial data).} For $t=0$ we have equality in (\ref{est:liminf-infty}) and (\ref{est:liminf-TV}).

\item \emph{(Lack of strict convergence for positive times).} For every $t>0$ the inequalities in (\ref{est:liminf-infty}) and (\ref{est:liminf-TV}) are strict.

\end{enumerate}

\end{thm}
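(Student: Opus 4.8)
The plan is to realise $u$ as a subsequential limit of semi-discrete trajectories whose initial data are a good discretization of a fixed smooth profile, corrupted near its maximum point in a way that is invisible in the $L^{2}$-limit at $t=0$ but is remembered for all positive times; the corruption is then kept under control through a comparison argument with explicit sub/supersolutions.

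\emph{Choice of the profile and of the limit.} On $(0,1)$ fix $u_{0}\in C^{\infty}$ with $u_{0}'(0)=u_{0}'(1)=0$, with $\|u_{0}'\|_{\infty}$ small enough that $\varphi''>0$ on $[-\|u_{0}'\|_{\infty},\|u_{0}'\|_{\infty}]$ (so $u_{0}$ is subcritical), and with a unique strict interior maximum at $x_{0}$, of value $c_{0}:=u_{0}(x_{0})$. By statement~(4) of Theorem~\ref{thmbibl:existence} the semi-discrete solutions of the natural sampling stay subcritical, so in that range the scheme is a consistent discretization of the uniformly parabolic equation (\ref{defn:PM-eqn-phi}) and its solutions converge to the classical solution $U$ of (\ref{defn:PM-eqn-phi}) with Neumann boundary conditions and datum $u_{0}$. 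We take $u:=U$ (its class in $C^{0}([0,\infty);L^{2}((0,1)))$); by the strong maximum principle, $t\mapsto\max_{x}U(t,x)$ is \emph{strictly} decreasing, hence $\|u(t,\cdot)\|_{\infty}<c_{0}$ for every $t>0$.

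\emph{The corrupted data and the comparison step.} For $n_{k}=k$ let $u_{0k}$ agree with the sampling of $u_{0}$ outside a window of width $r_{k}\to0$ around $x_{0}$; inside the window, modify it so that (i) $\|u_{0k}\|_{\infty}=c_{0}$, $u_{0k}\to u_{0}$ uniformly and $\TV(u_{0k})\to\TV(u_{0})$, and (ii) a cluster of values near $x_{0}$ sits at height $c_{0}$ and is fenced off from the rest of the profile by difference quotients in the supercritical range. Item~(i) forces the initial data to converge strictly, which is property~(1). The heart of the argument is to control the evolution: although $\varphi'$ is not monotone, a solution of (\ref{eqn:DPM}) can still be compared with sub/supersolutions whose difference quotients stay where $\varphi'$ is monotone, or which are piecewise affine with slopes on which $\varphi'$ behaves monotonically. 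Building explicit sub- and supersolutions $\underline{w}_{k}\le u_{0k}\le\overline{w}_{k}$ adapted to the fenced cluster and propagating $\underline{w}_{k}(t,\cdot)\le u_{k}(t,\cdot)\le\overline{w}_{k}(t,\cdot)$ in time, one gets, for all $t\ge0$ and all $k$,
\[
\max_{i}u_{k}(t,i)\ \ge\ c_{1}(t),\qquad \TV(u_{k}(t,i))\ \ge\ \TV(U(t,\cdot))+\eta(t),
\]
with $c_{1},\eta$ independent of $k$, $c_{1}(0)=c_{0}$, $\eta(t)>0$, and $c_{1}(t)>\max_{x}U(t,x)$ for $t>0$; the same comparison, via (\ref{th:est-ut-DPM}), also shows that the corruption does not spread macroscopically, so that the subsequential limit of $\{u_{k}\}$ is indeed $u=U$.

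\emph{Conclusion and main obstacle.} By lower semicontinuity of $\|\cdot\|_{\infty}$ and $\TV$ under $L^{2}$-convergence together with the comparison bounds, for every $t>0$,
\[
\liminf_{k}\|u_{k}(t,i)\|_{\infty}\ge c_{1}(t)>\max_{x}U(t,x)=\|u(t,\cdot)\|_{\infty},\qquad \liminf_{k}\TV(u_{k}(t,i))\ge\TV(U(t,\cdot))+\eta(t)>\TV(u(t,\cdot)),
\]
so both inequalities in (\ref{est:liminf-infty}) and (\ref{est:liminf-TV}) are strict — property~(2). The main obstacle is the comparison step: one must produce comparison functions that at once trap $u_{k}$, introduce no spurious variation at $t=0$, and relax slowly enough that $c_{1}(t)$ and $\eta(t)$ stay positive for every $t>0$; reconciling the last two requirements is delicate, since a perturbation that is invisible at $t=0$ tends to be dissipated almost instantly by the scheme, and the fencing by supercritical slopes has to be calibrated carefully against the growth rate of $\varphi'$.
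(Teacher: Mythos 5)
Your overall strategy coincides with the paper's: approximate a subcritical profile by discretizations that converge strictly at $t=0$, corrupt a vanishing cluster of values near the maximum, fence it off with supercritical difference quotients so that the flux $\varphi'(D_n^+u)$ out of the cluster is small, and trap the discrete solutions between explicit sub/supersolutions via a comparison lemma. The difficulty is that the step you yourself flag as ``the main obstacle'' --- producing the comparison functions and proving that the comparison propagates in time even though $\varphi'$ is not monotone --- is precisely the entire technical content of the paper's Section~3 (Lemma~\ref{lemma:comparison} and Proposition~\ref{prop:limit-un}), and your proposal does not carry it out. The comparison lemma is not a routine citation: it requires a supersolution on one side of the fence and a subsolution on the other, with the sign of the comparison flipping across the index $m$ where the supercritical jump sits, and the monotonicity of $\varphi'$ is used in opposite directions on the two sides (nondecreasing on $[0,\sigma_1]$ away from the fence, nonincreasing on $[\sigma_1,+\infty)$ at the fence). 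Likewise the calibration of the supersolution's decay rate against the decay of the anomalous cluster (the sequences $A_n,B_n,C_n,E_n,h_n,\mu_n$ in the paper, chosen so that $nh_n\to+\infty$ while all amplitudes vanish) is exactly the ``delicate reconciliation'' you mention; asserting that such $c_1(t)$ and $\eta(t)$ exist is the theorem, not a step toward it. So the proposal is a correct outline but has a genuine gap at its core.

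Two secondary points where your route is weaker than the paper's. First, you lean on identifying the limit as the classical solution $U$ and on the strong maximum principle for $U$; the paper deliberately avoids this (see its Remark on the characterization of the limit, which notes that identifying the limit requires either finer sub/supersolutions or a separate gradient-flow convergence argument) and instead uses the \emph{supersolution itself} to bound the limit from above by $\frac{\sigma_0}{2}\sin(\frac{\pi}{2}x)e^{-\lambda_0 t}$, which is all that is needed for strictness. Your claim that the corruption ``does not spread macroscopically, so that the subsequential limit is indeed $U$'' does not follow from (\ref{th:est-ut-DPM}) alone. Second, with an interior maximum the lower bound on $\TV(u_k(t,\cdot))$ does not reduce to the bound on the maximum: the paper works with a monotone datum and mixed Dirichlet/Neumann conditions, so that $\TV(u_n(t,\cdot))=\|u_n(t,\cdot)\|_\infty=u_n(t,n)$ by preservation of monotonicity (statement~(5) of Theorem~\ref{thmbibl:existence}), and only afterwards reflects oddly to obtain pure Neumann conditions with an interior anomaly. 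In your setup you would need a separate argument (e.g.\ preservation of unimodality plus control of the boundary values) to convert the max bound into a total-variation bound; this is not addressed.
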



Due to Theorem~\ref{main:no-strict}, it is not possible to deduce the monotonicity of the functions $t\mapsto\|u(t,x)\|_{\infty}$ and $t\mapsto \TV(u(t,x))$ from the corresponding monotonicities at discrete level. Nevertheless, the second main result of this paper is that those monotonicities hold true anyway.

\begin{thm}[Monotonicity results for generalized solutions]\label{main:monotonicity}

Let $u$ be a generalized solution to the Perona-Malik equation (\ref{defn:PM-eqn-phi}) with homogeneous Neumann boundary conditions in the interval $(0,1)$, obtained through semi-discrete approximation in the sense of Definition~\ref{defn:gen-sol}.

Then the following monotonicity results hold true.
\begin{enumerate}
\renewcommand{\labelenumi}{(\arabic{enumi})}

\item  \emph{(Maximum principle).} For every $t\geq 0$, let $M^{+}(t)$ and $M^{-}(t)$ denote, respectively, the (essential) supremum and infimum of the function $x\mapsto u(t,x)$.

Then the function $t\mapsto M^{+}(t)$ is nonincreasing, while the function $t\mapsto M^{-}(t)$ is nondecreasing.

\item  \emph{(Monotonicity of the total variation).} For every $t\geq 0$, let $\TV^{\pm}(t)$ denote the positive/negative total variation of the function $x\mapsto u(t,x)$.

Then the functions $t\mapsto \TV^{\pm}(t)$ are nonincreasing with respect to time.

\end{enumerate}

\end{thm}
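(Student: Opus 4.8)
The plan is to realize every generalized solution as a \emph{$uv$-evolution} and then prove both monotonicity statements inside that class, as announced in the introduction. By Theorem~\ref{thmbibl:v} a generalized solution $u$ already satisfies $u_t=v_x$ with $v\in L^\infty$, with $v(t,\cdot)\in H^1((0,1))$ and $v(t,0)=v(t,1)=0$ for a.e.\ $t$, and with $u(t,\cdot)\in BV((0,1))$ for every $t$; the one missing ingredient is the sign condition $v\,dDu\ge 0$ in the sense of measures (for a.e.\ $t$), which is what makes $u$ a $uv$-evolution in the sense of Definition~\ref{defn:uv-NBC}. At the discrete level this is immediate from (\ref{hp:svarphi'}): since $v_k(t,i)=\varphi'\big(D_{n_k}^+u_k(t,i)\big)$ and $\sigma\varphi'(\sigma)\ge 0$, the numbers $v_k(t,i)$ and $u_k(t,i+1)-u_k(t,i)$ always have the same sign. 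I would then pass this relation to the limit — exploiting $v_k\rightharpoonup v$ weakly-$*$, the weak-$*$ convergence of $D_xu_k(t,\cdot)$ to $D_xu(t,\cdot)$ for fixed $t$, and the continuity in $x$ of $v(t,\cdot)$ — to obtain $v\,dDu\ge 0$; this is Proposition~\ref{prop:joining-link}. Since the product of two merely weakly convergent factors need not pass to the limit, this passage is one of the delicate points, and the safest route is to carry out the energy computations below directly at the discrete level and pass \emph{them} to the limit.

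For the maximum principle, fix a convex $\gamma\in C^1(\re)$ that is nondecreasing with $\gamma\equiv 0$ on $(-\infty,0]$ (e.g.\ $\gamma(s)=(\max\{s,0\})^2$) and a real level $\ell$, and set $E_k(t):=\frac1{n_k}\sum_{i=1}^{n_k}\gamma\big(u_k(t,i)-\ell\big)$. Differentiating, using $u_k'(t,i)=D_{n_k}^-v_k(t,i)$, and summing by parts — the boundary terms vanish because $v_k(t,0)=v_k(t,n_k)=0$ (from the Neumann conditions and $\varphi'(0)=0$) — gives $E_k'(t)=-\sum_{i=1}^{n_k-1}\big(\gamma'(u_k(t,i+1)-\ell)-\gamma'(u_k(t,i)-\ell)\big)v_k(t,i)$. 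In each summand the first factor has the sign of $u_k(t,i+1)-u_k(t,i)$ (because $\gamma'$ is nondecreasing), and so does $v_k(t,i)$; hence $E_k'(t)\le 0$. Passing to the limit (continuity of $\gamma$ plus the uniform bound on $\|u_k\|_\infty$) yields that $E_\ell(t):=\int_0^1\gamma(u(t,x)-\ell)\,dx$ is nonincreasing, as a pointwise limit of nonincreasing functions. Since $E_\ell(t)=0$ exactly when $u(t,\cdot)\le\ell$ a.e., one has $M^+(t)=\inf\{\ell:E_\ell(t)=0\}$, and the monotonicity of every $E_\ell$ forces $t\mapsto M^+(t)$ to be nonincreasing; the claim for $M^-$ follows by applying the same argument to $-u$. (Equivalently, one may run this computation for the $uv$-evolution itself, integrating by parts in $x$ and using the $BV$ chain rule for $\gamma'\circ(u-\ell)$, whose derivative equals a nonnegative multiple of $D_xu$, together with the sign condition.)

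For the monotonicity of the total variation I would use the reduction to a higher-dimensional maximum principle announced in the introduction. For each $m\ge 1$ let $\Delta_{2m}=\{0<x_1\le\dots\le x_{2m}<1\}$ and $F_m(t;\vec x)=\sum_{j=1}^{2m}(-1)^j u(t,x_j)$, so that $\TV^+(u(t,\cdot))=\sup_{m\ge 1}\sup_{\vec x\in\Delta_{2m}}F_m(t;\vec x)$. The goal is to show that for every $m$ and every level $\ell\ge 0$ the multi-variable discrete energy $E_{k,m}(t):=n_k^{-2m}\sum_{i_1\le\dots\le i_{2m}}\gamma\big(\sum_j(-1)^j u_k(t,i_j)-\ell\big)$ is nonincreasing; a Riemann-sum argument as in the previous step then shows that $t\mapsto\int_{\Delta_{2m}}\gamma(F_m(t;\vec x)-\ell)\,d\vec x$ is nonincreasing, which (by the standard approximation of $\TV^+$ by alternating sums) gives that $\TV^+(u(t,\cdot))$ — characterized as the infimum of those $\ell\ge 0$ for which all these integrals vanish — is nonincreasing; $\TV^-$ follows by considering $-u$, and then $\TV=\TV^++\TV^-$. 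The monotonicity of $E_{k,m}$ is the heart of the matter and, I expect, the main obstacle: writing $a_{i_1\dots i_{2m}}=\gamma'\big(\sum_j(-1)^j u_k(i_j)-\ell\big)\ge 0$, one differentiates, collects the coefficient of each $u_k'(t,p)=D_{n_k}^-v_k(t,p)$ into a quantity $b_p$ — a signed sum of the $a$'s over index-configurations that use the value $p$, with the automatic cancellation of the configurations in which $p$ occupies two adjacent slots — sums by parts using $v_k(t,0)=v_k(t,n_k)=0$, and is reduced to the pointwise inequality $v_k(t,p)\,(b_{p+1}-b_p)\ge 0$. This last inequality should follow from a telescoping argument in which every increment $\gamma'(\cdot+\delta)-\gamma'(\cdot)$ has the sign of $\delta:=u_k(t,p+1)-u_k(t,p)$ and every leftover term of the form $\gamma'(\text{nonpositive number})$ vanishes because $\ell\ge 0$, so that $b_{p+1}-b_p$ has the sign of $\delta$ and hence of $v_k(t,p)$. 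The case $m=1$ works out explicitly as a model; obtaining a clean bookkeeping for general $m$ — in particular handling the configurations where $p$ collides with a neighbouring index — is the technically demanding point, after which the limiting procedure above finishes the proof.
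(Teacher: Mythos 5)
Part~(1) of your proposal is sound: running the convex-energy computation directly on the discrete system, where the sign relation between $v_k(t,i)=\varphi'(D_{n_k}^+u_k(t,i))$ and $u_k(t,i+1)-u_k(t,i)$ is an immediate consequence of (\ref{hp:svarphi'}), and then passing the scalar quantities $E_k(t)$ to the limit using only the strong $L^2$ convergence at fixed $t$, is a legitimate variant of the paper's argument and neatly sidesteps the delicate limit of the product $v_k\cdot D^+u_k$. (Your caution about that product is justified: the paper does prove $v\,Du\ge 0$ in Proposition~\ref{prop:joining-link}, but only by integrating in time first and testing, precisely because the sections $v_k(t,\cdot)$ at fixed $t$ need not converge to $v(t,\cdot)$.)

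Part~(2), however, contains a genuine gap exactly where you flag it: the inequality $E_{k,m}'(t)\le 0$ for the multi-index energy is asserted, not proved, and the proposed telescoping is not obviously correct for $m\ge 2$. The coefficient $b_p$ of $u_k'(t,p)$ is a signed sum of $\gamma'$-values over all ordered configurations containing $p$, with multiplicities $\sum_{j:i_j=p}(-1)^j$; comparing $b_{p+1}$ with $b_p$ requires a bijection between configurations that is broken by the ordering constraint whenever $p$ or $p+1$ collides with a neighbouring index, and the leftover terms are $\gamma'$ evaluated at alternating sums with $2m-2$ surviving entries minus $\ell$ --- these are bounded by $TV_{m-1}^+-\ell$, not by $0$, so they do not vanish merely because $\ell\ge 0$. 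This is precisely the difficulty that the paper's proof is structured to avoid: it establishes the sign condition once (Proposition~\ref{prop:joining-link}), proves a maximum principle for $UV$-evolutions on a domain $\Omega\subseteq\re^{d}$ with \emph{mixed} Dirichlet/Neumann boundary conditions in which the Dirichlet datum $D_0(t)$ is allowed to be a nonincreasing function of time (Proposition~\ref{prop:UVW}), and then proceeds by \emph{induction on $m$}: on the open simplex $a<x_1<\dots<x_{2m+2}<b$ the collision faces $x_i=x_{i+1}$ carry the Dirichlet condition $U\le TV_m^+(t)=:D_0(t)$, which is nonincreasing by the inductive hypothesis, so that choosing the convex weight $\psi$ to vanish on $(-\infty,M(0)]$ with $M(0)\ge D_0(0)$ kills all boundary terms. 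If you want to keep your discrete route, you should import this induction: fix the level $\ell$ strictly above $\sup_k TV_m^+(u_k(t,\cdot))$ is not available uniformly, so the cleaner fix is to abandon the fixed-$t$ discrete computation for part~(2) and instead verify the sign condition in the limit (as in Proposition~\ref{prop:joining-link}) and then argue at the continuous level with the inductive Dirichlet datum. As written, the proof of the monotonicity of $\TV^{\pm}$ is incomplete.
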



\subsection{UV-evolutions}

In the proof of Theorem~\ref{main:monotonicity} we forget that our generalized solutions are limits of solutions to the semi-discrete scheme. We limit ourselves to considering them as functions that satisfy the characterization described in Theorem~\ref{thmbibl:v}, together with a suitable sign condition. This leads to the following notion.

\begin{defn}[$uv$-evolution with NBC in dimension one]\label{defn:uv-NBC}
\begin{em}

A \emph{$uv$-evolution with homogeneous Neumann boundary conditions} in an interval $(a,b)\subseteq\re$ is a pair of measurable functions
\begin{equation}
u:(0,+\infty)\times(a,b)\to\re
\qquad\text{and}\qquad
v:(0,+\infty)\times(a,b)\to\re
\nonumber
\end{equation}
with the following properties. 
\begin{itemize}

\item  (Time regularity). The function $u$ admits a weak derivative $u_{t}$ with respect to time, and
\begin{equation}
u_{t}\in L^{1}((0,T)\times(a,b))
\qquad
\forall T>0.
\label{hp:t-reg-u}
\end{equation}

\item  (Space regularity). For almost every $t>0$ it turns out that
\begin{gather}
\text{the function $x\mapsto u(t,x)$ is in $BV((a,b))$},
\label{hp:s-reg-u}
\\[0.5ex]
\text{the function $x\mapsto v(t,x)$ is in $W^{1,1}((a,b))$}.
\label{hp:s-reg-v}
\end{gather}

\item   (Evolution equation).  The functions $u$ and $v$ satisfy
\begin{equation}
u_{t}(t,x)=v_{x}(t,x)
\qquad
\text{in }(0,+\infty)\times(a,b).
\label{hp:eqn-uv}
\end{equation}

\item  (Neumann boundary conditions). For almost every $t>0$ it turns out that 
\begin{equation}
v(t,a)=v(t,b)=0.
\label{hp:NBC-uv}
\end{equation}

\item   (Sign condition).  For almost every $t>0$ it turns out that
\begin{equation}
v(t,x)\cdot Du(t,x)\geq 0
\qquad
\text{as a measure in $(a,b)$.}
\label{hp:sign-uv}
\end{equation}

\end{itemize}

\end{em}
\end{defn}

\begin{rmk}\label{rmk:uv}
\begin{em}

Let us comment on some regularity issues in Definition~\ref{defn:uv-NBC} above.
\begin{itemize}

\item  (Evolution equation). The evolution equation (\ref{hp:eqn-uv}) can be seen both as an equality between functions in $L^{1}((0,T)\times (a,b))$, and as an equality between functions in $L^{1}((a,b))$ for almost every $t>0$.

\item  (Time regularity and initial datum). The time regularity assumption (\ref{hp:t-reg-u}) implies that $u$, as a function from $(0,+\infty)$ to $L^{1}((a,b))$, is continuous, and actually also absolutely continuous, and hence it can be extended up to $t=0$. In particular, all sections $x\mapsto u(t,x)$ are well defined as functions in $L^{1}((a,b))$ for every $t\geq 0$, including the initial datum at $t=0$.

\item  (Neumann boundary conditions). Due to the space regularity assumption (\ref{hp:s-reg-v}), for almost every $t>0$ the function $x\to v(t,x)$ is continuous up to the boundary, and hence the pointwise values in (\ref{hp:NBC-uv}) make sense.

\item  (Sign condition). For almost every $t>0$, the left-hand side of (\ref{hp:sign-uv}) is a well defined signed measure. Indeed, due to the space regularity assumptions (\ref{hp:s-reg-u}) and (\ref{hp:s-reg-v}), it is the product of the continuous function $x\to v(t,x)$ and the signed measure $Du$, which is the derivative of the $BV$ function $x\to u(t,x)$. 
\end{itemize}

\end{em}
\end{rmk}


The proof of Theorem~\ref{main:monotonicity} follows from the combination of the following two results, where we show that $uv$-evolutions have some monotonicity properties, and our generalized solutions are $uv$-evolutions. The formal statements are the following.

\begin{prop}[Monotonicity properties of $uv$-evolutions with NBC in dimension one]\label{prop:uv-1}

Let $(u,v)$ be a uv-evolution with Neumann boundary conditions in an interval $(a,b)\subseteq\re$, in the sense of Definition~\ref{defn:uv-NBC}.

Then the following monotonicity results hold true (here we always consider the representative of $u$ that is continuous with values in $L^{1}((a,b))$, see Remark~\ref{rmk:uv}).

\begin{enumerate}
\renewcommand{\labelenumi}{(\arabic{enumi})}

\item  \emph{(Maximum principle).} For every $t\geq 0$, let $M^{+}(t)$ and $M^{-}(t)$ denote, respectively, the (essential) supremum and infimum of the function $x\mapsto u(t,x)$.

Then the function $t\mapsto M^{+}(t)$ is nonincreasing, while the function $t\mapsto M^{-}(t)$ is nondecreasing.

\item  \emph{(Monotonicity of the total variation).} For every $t\geq 0$, let $\TV^{\pm}(t)$ denote the positive/negative total variation of the function $x\mapsto u(t,x)$.

Then the functions $t\mapsto \TV^{\pm}(t)$ are nonincreasing.

\end{enumerate}

\end{prop}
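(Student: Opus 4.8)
The plan is to treat the two statements as instances of a single maximum-principle mechanism, exactly as anticipated in the introduction. I would start with the maximum principle itself. Fix $0\le t_1<t_2$ and let $k=M^{+}(t_1)$. The key test function is $w(t,x):=(u(t,x)-k)^{+}$. For a.e.\ $t$ the section $x\mapsto u(t,x)$ is $BV$, so $w(t,\cdot)$ is $BV$ as well, and I want to differentiate $t\mapsto\int_a^b w(t,x)\,dx$ (or, more robustly, a smooth convex approximation $\int_a^b\beta_\delta(u(t,x)-k)\,dx$ with $\beta_\delta$ convex, nondecreasing, vanishing on $(-\infty,0]$, then let $\delta\to0$). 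Using $u_t=v_x$ and integrating by parts in $x$,
\[
\frac{d}{dt}\int_a^b\beta_\delta(u-k)\,dx=\int_a^b\beta_\delta'(u-k)\,v_x\,dx
=\bigl[\beta_\delta'(u-k)\,v\bigr]_a^b-\int_{(a,b)}v\,\beta_\delta''(u-k)\,Du .
\]
The boundary term vanishes by the Neumann condition $v(t,a)=v(t,b)=0$. For the remaining term I need to know the sign of $\int v\,\beta_\delta''(u-k)\,Du$. The measure $\beta_\delta''(u-k)\,Du$ is supported where $u>0$-ish, i.e.\ essentially where $u\ge k\ge 0$; on that set $v\,Du\ge0$ by the sign condition (\ref{hp:sign-uv}), and $\beta_\delta''\ge0$, so the integral is $\ge0$ and the whole right-hand side is $\le0$. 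Hence $\int_a^b\beta_\delta(u(t,\cdot)-k)\,dx$ is nonincreasing; letting $\delta\to0$ gives that $\int_a^b(u(t,x)-k)^{+}\,dx$ is nonincreasing, and since it is $0$ at $t_1$ it is $0$ for all later times, i.e.\ $u(t,x)\le k$ a.e.\ for $t\ge t_1$, which is $M^{+}(t_2)\le M^{+}(t_1)$. The statement for $M^{-}$ follows by applying this to $-u$, noting that $(-u,-v)$ is again a $uv$-evolution (the sign condition is symmetric).

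For the monotonicity of $\TV^{+}$ I would carry out the reduction announced in the introduction: for fixed $m\ge1$ consider, on the closed simplex $S_m=\{a<x_1\le\cdots\le x_{2m}<b\}$, the function $U_m(t,x_1,\dots,x_{2m}):=\sum_{i=1}^{2m}(-1)^i u(t,x_i)$, and show $t\mapsto \sup_{S_m}U_m(t,\cdot)$ is nonincreasing; taking the supremum over $m$ then gives the result for $\TV^{+}$, and $\TV^{-}$ is obtained from $-u$. The point is that $U_m$ solves, in the $2m$ space variables, the system $\partial_t U_m=\sum_i(-1)^i v_x(t,x_i)=\operatorname{div}_x\bigl((-1)^1 v(t,x_1),\dots,(-1)^{2m}v(t,x_{2m})\bigr)$, which is again of ``$u_t=\operatorname{div}V$'' type. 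So I would repeat the argument above in higher dimension: with $\beta_\delta$ as before and $K:=\sup_{S_m}U_m(t_1,\cdot)$, compute
\[
\frac{d}{dt}\int_{S_m}\beta_\delta(U_m-K)\,dx
=\int_{S_m}\beta_\delta'(U_m-K)\sum_{i=1}^{2m}(-1)^i v_x(t,x_i)\,dx
=-\sum_{i=1}^{2m}\int_{S_m} v(t,x_i)\,\partial_{x_i}\!\bigl(\beta_\delta'(U_m-K)\bigr)\,dx+(\text{bdry}),
\]
and check that the boundary contributions all vanish. There are two kinds of boundary faces of $S_m$: the outer faces $x_1=a$ and $x_{2m}=b$, where the corresponding $v$ vanishes by the Neumann condition; and the diagonal faces $x_i=x_{i+1}$, where the outward normals for the $i$-th and $(i+1)$-th variables are opposite while the two coefficients $(-1)^i v(t,x_i)$ and $(-1)^{i+1}v(t,x_{i+1})$ are equal in magnitude and opposite in sign — so those two face-integrals cancel in pairs. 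It remains to see that the bulk term has the right sign: $\partial_{x_i}\beta_\delta'(U_m-K)=\beta_\delta''(U_m-K)\,(-1)^i Du(t,x_i)$, so the $i$-th summand is $-\int v(t,x_i)\,(-1)^i\,\beta_\delta''(U_m-K)\,(-1)^i Du(t,x_i)=-\int \beta_\delta''(U_m-K)\,v(t,x_i)\,Du(t,x_i)$, which is $\le0$ because $\beta_\delta''\ge0$, the integration in $x_i$ runs over a region where (on the support of $\beta_\delta''(U_m-K)$, i.e.\ where $U_m\ge0$) $v\cdot Du\ge0$ by the sign condition, and the remaining $2m-1$ integrations only add nonnegative mass. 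Summing over $i$ and passing $\delta\to0$ gives that $\int_{S_m}(U_m-K)^{+}$ is nonincreasing, hence $0$ for all $t\ge t_1$, i.e.\ $\sup_{S_m}U_m$ is nonincreasing.

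The main obstacle is analytic, not conceptual: justifying the differentiation of $t\mapsto\int\beta_\delta(\cdot)$ and the two integrations by parts at the low regularity at hand — $u$ is only $BV$ in $x$ and $W^{1,1}$ in $t$, $v$ is only $W^{1,1}$ in $x$ for a.e.\ $t$ — and, delicately, handling the jump part of $Du$ in the product $v\,\beta_\delta''(u-k)\,Du$ (the chain rule for $BV$ compositions introduces the averaged value $v$ across jumps, and one must check the sign condition is preserved under this averaging, which it is because $v$ is continuous). I expect the cleanest route is to fix a smooth, nondecreasing, convex $\beta_\delta$ that is affine outside a $\delta$-neighbourhood of $0$ and constant to the left of $0$, work with the absolutely continuous-in-$t$ map $t\mapsto\int\beta_\delta(u(t,\cdot)-k)$, differentiate it a.e., use Fubini together with the $L^1$ bounds $u_t\in L^1$ and $\|v\|_\infty$, $\operatorname{TV}(u(t,\cdot))$, $\|v_x(t,\cdot)\|_{L^1}$ integrable in $t$, and invoke the standard $BV$ chain rule (e.g.\ from~\cite{AFP}) to rewrite $\partial_x\beta_\delta(u-k)=\beta_\delta'(u-k)\,\partial_x^{ac}u\,\mathcal L^1 + (\text{jump terms with sign fixed by }v)$. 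The higher-dimensional version needs the same $BV$ chain rule applied to $U_m$ on the simplex, plus a careful bookkeeping of the diagonal faces; once the one-dimensional integration-by-parts lemma is stated cleanly, the $2m$-dimensional case is a routine iteration.
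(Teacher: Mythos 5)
Your proof of the maximum principle is essentially the paper's argument (the paper phrases it as a general maximum principle for divergence-form evolutions $U_t=\operatorname{div}V$ in any dimension, with a convex test function $\psi$ vanishing below the initial supremum, integration by parts, the $BV$ chain rule for the jump part, and the sign condition), and your reduction of $\TV^{+}$ to a maximum principle for $U_m=\sum_{i=1}^{2m}(-1)^iu(t,x_i)$ on the simplex is also the paper's strategy. However, there is a genuine gap in your treatment of the diagonal faces $\{x_i=x_{i+1}\}$ of the simplex: the two boundary contributions there do \emph{not} cancel, they reinforce. Concretely, the outward normal to $\{x_1<\dots<x_{2m}\}$ on that face is $\nu=(e_i-e_{i+1})/\sqrt2$, so
\begin{equation}
\langle V,\nu\rangle=\tfrac{1}{\sqrt2}\bigl((-1)^i v(t,x_i)-(-1)^{i+1}v(t,x_{i+1})\bigr)=\tfrac{(-1)^i}{\sqrt2}\bigl(v(t,x_i)+v(t,x_{i+1})\bigr)=\sqrt2\,(-1)^i v(t,x_i)
\nonumber
\end{equation}
on $\{x_i=x_{i+1}\}$: the sign flip from $(-1)^{i+1}=-(-1)^i$ is exactly compensated by the sign flip between the upper limit of the $x_i$-integration and the lower limit of the $x_{i+1}$-integration. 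Since $v$ has no sign, this flux term $\beta_\delta'(U_m-K)\langle V,\nu\rangle$ is not controllable as it stands, and your argument breaks precisely at the one point where the whole proof is delicate.

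The paper's fix is to treat the diagonal faces as \emph{Dirichlet} rather than Neumann boundary: on $\{x_i=x_{i+1}\}$ two consecutive terms of $U_m$ cancel, so the restriction of $U_m$ to that face is a competitor for the positive $(m-1)$-variation, whence $U_m\leq \TV_{m-1}^{+}(t)$ there. One then argues by induction on $m$: assuming $\TV_{m-1}^{+}$ is already known to be nonincreasing, one has $U_m\leq \TV_{m-1}^{+}(t)\leq \TV_{m-1}^{+}(t_1)\leq K$ on the diagonal faces for all $t\geq t_1$, so $\beta_\delta'(U_m-K)$ vanishes there and the problematic boundary term disappears (the base case $m=1$ works with Dirichlet datum $0$ on the single diagonal face). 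Your proposal contains neither the induction nor any mechanism forcing $\beta_\delta'(U_m-K)=0$ on the diagonal faces, so as written the monotonicity of $\TV^{\pm}$ is not established; the rest of the bookkeeping (the sign of the bulk term, the outer faces $x_1=a$ and $x_{2m}=b$, and the limit $\delta\to0$) is fine.
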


\begin{prop}[Joining link]\label{prop:joining-link}

Let $u$ be a generalized solution to equation (\ref{defn:PM-eqn-phi}) with homogeneous Neumann boundary conditions in the interval $(0,1)$, obtained through semi-discrete approximation in the sense of Definition~\ref{defn:gen-sol}. Let $v$ be the function defined in statement~(3) of Theorem~\ref{thmbibl:v}.

Then the pair $(u,v)$ is a uv-evolution with homogeneous Neumann boundary conditions in $(a,b)$ in the sense of Definition~\ref{defn:uv-NBC}.

\end{prop}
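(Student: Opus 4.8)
The plan is to observe that conditions (\ref{hp:t-reg-u})--(\ref{hp:NBC-uv}) of Definition~\ref{defn:uv-NBC} are already contained in Theorem~\ref{thmbibl:v}, and then to work to establish the sign condition (\ref{hp:sign-uv}), which is the only genuinely new point. With $(a,b)=(0,1)$: statement~(1) of Theorem~\ref{thmbibl:v} gives $u_{t}\in L^{2}((0,+\infty)\times(0,1))\subseteq L^{1}((0,T)\times(0,1))$ for every $T>0$, hence (\ref{hp:t-reg-u}); statement~(2) gives $u(t,\cdot)\in BV((0,1))$ for every $t\geq 0$, hence (\ref{hp:s-reg-u}); and statement~(3) gives $v(t,\cdot)\in H^{1}((0,1))\subseteq W^{1,1}((0,1))$ for almost every $t>0$, hence (\ref{hp:s-reg-v}), along with $u_{t}=v_{x}$ and $v(t,0)=v(t,1)=0$, which are exactly (\ref{hp:eqn-uv}) and (\ref{hp:NBC-uv}).

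For the sign condition, the idea is to pass to the limit in its trivial discrete counterpart. For every $k$ and almost every $t>0$ I would introduce the measure on $(0,1)$
\begin{equation}
\nu_{k}(t):=\sum_{i=1}^{n_{k}-1}v_{k}(t,i)\,\bigl(u_{k}(t,i+1)-u_{k}(t,i)\bigr)\,\delta_{i/n_{k}},
\nonumber
\end{equation}
which is the natural discrete version of the product $v\cdot Du$, since $v_{k}(t,i)$ is the value at $i/n_{k}$ of the piecewise constant function associated to $v_{k}$ and $u_{k}(t,i+1)-u_{k}(t,i)$ is the jump at $i/n_{k}$ of the one associated to $u_{k}$. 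The point is that $\nu_{k}(t)$ is a \emph{nonnegative} measure, because by the definition (\ref{defn:vk}) of $v_{k}$ and property (\ref{hp:svarphi'}) each coefficient equals
\begin{equation}
v_{k}(t,i)\,\bigl(u_{k}(t,i+1)-u_{k}(t,i)\bigr)=\frac{1}{n_{k}}\,\sigma\,\varphi'(\sigma)\Big|_{\sigma=D_{n_{k}}^{+}u_{k}(t,i)}\geq 0.
\nonumber
\end{equation}
Moreover its total mass is at most $\varphi'(\sigma_{1})\cdot\TV(u_{k}(t,\cdot))\leq\varphi'(\sigma_{1})M$, using that $\sigma_{1}$ is a maximum point of $\varphi'$, the monotonicity of the discrete total variation from statement~(2) of Theorem~\ref{thmbibl:existence}, and the uniform bound (\ref{hp:bound-data-k}). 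Hence, for each fixed $T>0$, the space-time measures $\nu_{k}$ on $(0,T)\times(0,1)$ given by $\langle\nu_{k},\psi\rangle:=\int_{0}^{T}\langle\nu_{k}(t),\psi(t,\cdot)\rangle\,dt$ are nonnegative with uniformly bounded mass, so along a subsequence they converge weakly$^{*}$ to a nonnegative measure $\nu$ on $(0,+\infty)\times(0,1)$.

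The central step is to identify $\nu$ with the space-time measure $v(t,x)\,Du(t,x)\,dt$. Given $\psi\in C_{c}^{\infty}((0,+\infty)\times(0,1))$, I would use discrete summation by parts together with the discrete Leibniz rule for $D_{n_{k}}^{-}$ (the boundary terms disappear, since $\psi(t,\cdot)$ is supported away from the endpoints for $k$ large) to rewrite $\langle\nu_{k},\psi\rangle$ as the sum of
\begin{equation}
-\iint\widehat{\psi}_{k}\,\bigl(D_{n_{k}}^{-}v_{k}\bigr)\,\widehat{u}_{k}\,dx\,dt
\qquad\text{and}\qquad
-\iint\widehat{v}_{k}\!\left(t,x-\tfrac{1}{n_{k}}\right)\bigl(D_{n_{k}}^{-}\psi\bigr)\,\widehat{u}_{k}\,dx\,dt,
\nonumber
\end{equation}
integrals over $(0,+\infty)\times(0,1)$, where the hatted symbols and $D_{n_{k}}^{-}v_{k}$, $D_{n_{k}}^{-}\psi$ stand for the associated piecewise constant functions of $x$. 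In the first integral $\widehat{\psi}_{k}\widehat{u}_{k}\to\psi u$ strongly in $L^{2}$ (the convergence of $u_{k}$ in Definition~\ref{defn:gen-sol} upgrades to strong $L^{2}$ convergence in space-time, and $\widehat{\psi}_{k}\to\psi$ uniformly), whereas $D_{n_{k}}^{-}v_{k}\rightharpoonup v_{x}$ weakly in $L^{2}$ by statement~(3) of Theorem~\ref{thmbibl:v}; in the second integral $(D_{n_{k}}^{-}\psi)\widehat{u}_{k}\to\psi_{x}u$ strongly in $L^{2}$, whereas the translated functions $\widehat{v}_{k}(t,x-\tfrac{1}{n_{k}})$ still converge weakly$^{*}$ in $L^{\infty}$ to $v$ (translating by $1/n_{k}\to 0$ does not affect the weak$^{*}$ limit). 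Passing to the limit in these two ``strong $\times$ weak'' products gives
\begin{equation}
\langle\nu_{k},\psi\rangle\longrightarrow-\iint u\,\bigl(\psi\,v_{x}+\psi_{x}\,v\bigr)\,dx\,dt=\int_{0}^{+\infty}\left(\int_{0}^{1}\psi(t,x)\,v(t,x)\,Du(t,x)\right)dt,
\nonumber
\end{equation}
where the last equality is the integration by parts formula for the $BV$ function $u(t,\cdot)$ against the $W^{1,1}$ function $\psi(t,\cdot)v(t,\cdot)$ (whose boundary values vanish). This shows $\nu=v\,Du\,dt$; since $\nu\geq 0$, a routine disintegration (test with $\psi(t,x)=\chi(t)\eta(x)$ for arbitrary $\chi\geq 0$, and let $\eta$ run over a countable family of nonnegative functions dense in $C_{c}((0,1))$) yields $v(t,\cdot)\,Du(t,\cdot)\geq 0$ for almost every $t>0$, which is precisely (\ref{hp:sign-uv}).

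I expect the main obstacle to be exactly this identification of $\nu$, namely the passage to the limit in the product $v_{k}\cdot Du_{k}$: both factors converge only weakly, so the limit cannot be computed factor by factor and some compensation is required. The summation by parts provides it, trading the product of two weakly convergent quantities for a sum in which the strongly convergent sequence $\widehat{u}_{k}\to u$ is always paired with one of the weakly convergent sequences $D_{n_{k}}^{-}v_{k}\rightharpoonup v_{x}$ and $\widehat{v}_{k}\rightharpoonup v$ provided by Theorem~\ref{thmbibl:v}. The remaining points (the regularity bookkeeping in the first paragraph, the vanishing of boundary terms, and the disintegration) are routine.
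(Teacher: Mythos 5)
Your proposal is correct and follows essentially the same route as the paper: the regularity, equation, and boundary conditions are read off from Theorem~\ref{thmbibl:v}, and the sign condition is obtained by a discrete summation by parts that pairs the strongly convergent $u_{k}$ with the weakly convergent $v_{k}$ and $D_{n_{k}}^{-}v_{k}$, followed by the continuum $BV$--$W^{1,1}$ integration by parts to identify the limit and a countable dense family of nonnegative test functions to get the inequality for almost every $t$. The only differences are cosmetic (space-time test functions and weak$^{*}$ convergence of nonnegative measures, versus the paper's spatial test functions integrated over arbitrary time intervals $[t_{1},t_{2}]$).
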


\begin{rmk}[Future perspectives]
\begin{em}

We suspect that other monotonicity results might follow from (or at least be related to) the maximum principle, both in the case of generalized solutions to the Perona-Malik equation, and in the more general framework of $uv$-evolutions. We refer for example to the monotonicity of the total variation of the composition $g(u(t,x))$ of $u$ with a continuous function $g$, or even to the monotonicity of the number of elements of level sets of $u$ (to be suitably defined, since $u$ is just in $BV$). 

The latter in particular would lead to an extension of the classical result by S.~Angenent~\cite{1998-Crelle-Angenent} to forward-backward parabolic equations. We plan to investigate these issues in the future.

\end{em}
\end{rmk}


\setcounter{equation}{0}
\section{A counterexample to strict convergence}\label{sec:no-strict}

In this section we prove Theorem~\ref{main:no-strict}. Before entering into details, we give an outline of the strategy of the proof. Due to assumption (\ref{hp:phi''(0)}), there exist real numbers $0<\lambda_{0}\leq \Lambda_{0}$ and $\sigma_{0}\in(0,\sigma_{1})$ such that
\begin{equation}
\lambda_{0}(\alpha-\beta)\leq
\varphi'(\alpha)-\varphi'(\beta)\leq
\Lambda_{0}(\alpha-\beta)
\qquad
\forall\: 0\leq\beta\leq\alpha\leq\sigma_{0}.
\label{hp:phi-bilip}
\end{equation}

Now let us consider the function
\begin{equation}
u_{0}(x)=\frac{\sigma_{0}}{2}\sin\left(\frac{\pi}{2}x\right)
\qquad
\forall x\in[0,1],
\label{defn:u0}
\end{equation}
and for every positive integer $n$ let us consider the discrete approximation $u_{0n}$ of $u_{0}$ defined by
\begin{equation}
u_{0n}(i):=
\begin{cases}
\displaystyle \frac{\sigma_{0}}{2}\sin\left(\frac{\pi}{2}\frac{i}{n}\right)  & 
\text{if }i\in\{1,\ldots,m_{n}\}, 
\\[2.5ex]
\displaystyle \frac{\sigma_{0}}{2}
+J_{n}\quad  & 
\text{if }i\in\{m_{n}+1,\ldots,n\},
\end{cases}
\label{hp:un-data}
\end{equation}
where $J_{n}\to 0^{+}$ is a suitable sequence of positive real numbers, and $\{m_{n}\}$ is a suitable sequence of integers such that $0<m_{n}<n$ for every $n\geq 2$, and
\begin{equation}
\lim_{n\to +\infty}\frac{m_{n}}{n}=1.
\label{defn:mn}
\end{equation}

Then the following facts hold true.
\begin{enumerate}
\renewcommand{\labelenumi}{(\arabic{enumi})}

\item  The sequence $\{u_{0n}\}$ converges to $u_{0}$ uniformly in $[0,1]$ (with the usual meaning of Remark~\ref{rmk:discr2cont}), and the total variation of $u_{0n}$ converges to the total variation of $u_{0}$.

This is true because of (\ref{defn:mn}) and the fact that $J_{n}\to 0$. Roughly speaking, this means that the perturbation for $i>m_{n}$ is small, both horizontally and vertically.

\item  For every positive integer $n$, let us consider the solution $u_{n}(t,i)$ of the semi-discrete equation (\ref{eqn:DPM}), with initial datum $u_{0n}(i)$, and the understanding that (the first condition mimics a Dirichlet boundary condition in $x=0$, the second one a Neumann boundary condition in $x=1$)
\begin{equation}
u_{n}(t,0)=0
\qquad\text{and}\qquad
u_{n}(t,n+1)=u_{n}(t,n)
\qquad
\forall t\geq 0.
\label{hp:D-DNBC}
\end{equation}

The function $u_{n}(t,i)$ turns out to be increasing with respect to the second variable for every $t\geq 0$, and in particular
\begin{equation}
\|u_{n}(t,i)\|_{\infty}=
\TV(u_{n}(t,i))=
u_{n}(t,n)
\qquad
\forall t\geq 0.
\label{eqn:TV-un}
\end{equation}

\item  It turns out that
\begin{equation}
\limsup_{n\to +\infty}u_{n}(t,\lceil nx\rceil)\leq
\frac{\sigma_{0}}{2}\sin\left(\frac{\pi}{2}x\right)\cdot\exp(-\lambda_{0}t)
\qquad
\forall t\geq 0,
\quad
\forall x\in(0,1),
\label{th:limsup-un}
\end{equation}
and
\begin{equation}
\liminf_{n\to +\infty}u_{n}(t,n)\geq\frac{\sigma_{0}}{2}
\qquad
\forall t\geq 0.
\label{th:un-n}
\end{equation}

These are the two key points of the proof, and they are established in Proposition~\ref{prop:limit-un} by constructing a suitable sub/supersolution.

\item  The sequence $\{u_{n}\}$ fits into the framework of Theorem~\ref{thmbibl:compactness} (we point out that both Theorem~\ref{thmbibl:existence} and Theorem~\ref{thmbibl:compactness} remain valid also with discrete Dirichlet/Neumann boundary conditions). Any limit point $u(t,x)$ is nondecreasing with respect to $x$ for every $t\geq 0$. Due to (\ref{th:limsup-un}), any limit point satisfies
\begin{equation}
\|u(t,x)\|_{\infty}=
\TV(u(t,x))=
u(t,1)\leq
\frac{\sigma_{0}}{2}\exp(-\lambda_{0}t)
\qquad
\forall t\geq 0.
\nonumber
\end{equation}

On the other hand, from (\ref{eqn:TV-un}) and (\ref{th:un-n}) we deduce that
\begin{equation}
\liminf_{n\to +\infty}\|u_{n}(t,i)\|_{\infty}=
\liminf_{n\to +\infty}\TV(u_{n}(t,i))\geq
\frac{\sigma_{0}}{2},
\nonumber
\end{equation}
from which we conclude that there is strict inequality for every $t>0$ both in (\ref{est:liminf-infty}) and in (\ref{est:liminf-TV}). This proves the conclusions of Theorem~\ref{main:no-strict} for the problem with one Dirichlet and one Neumann boundary condition. If we want an example with Neumann boundary conditions in both endpoints, it is enough to reproduce the phenomenon in $(-1,1)$ by extending $u_{n}(t,i)$ as an odd function for negative values of $i$. 

\end{enumerate}

\begin{rmk}[Characterization of the limit]
\begin{em}

It is possible to show that the whole sequence $u_{n}$ converges to the unique classical solution $u(t,x)$ to equation (\ref{defn:PM-eqn-phi}) with initial datum (\ref{defn:u0}) and boundary conditions (of Dirichlet type in $x=0$ and of Neumann type in $x=1$)
\begin{equation}
u(t,0)=u_{x}(t,1)=0.
\nonumber
\end{equation}

The convergence is in $C^{0}\left([0,+\infty);L^{p}((0,1))\strut\right)$ for every finite $p\geq 1$, but of course not for $p=+\infty$, with the usual meaning of Remark~\ref{rmk:discr2cont}.

This fact can be proved either by constructing more refined subsolutions and supersolutions, or by relying on general results concerning the convergence of gradient-flows, as in the proof of~\cite[Theorem~2.10]{GG:grad-est}.

\end{em}
\end{rmk}

\begin{rmk}[Variants of the counterexample]
\begin{em}

The example described above can be generalized to any interval by translation and/or homothety. Moreover, $u_{0}$ and $u_{0n}$ can be extended by periodicity/reflection in order to obtain an example where the anomalous behavior is not at the boundary, but in a neighborhood of some internal maximum/minimum point, as described in the introduction.

\end{em}
\end{rmk}

The rest of this section is devoted to the proof of (\ref{th:limsup-un}) and (\ref{th:un-n}). The main tool is the following comparison result for solutions to (\ref{eqn:DPM}) (see also~\cite{2011-M3AS-BelNovPao} where a similar idea is exploited). 


\begin{lemma}[Comparison principle for discrete sub/supersolutions]\label{lemma:comparison}

Let $0<m<n$ be two positive integers, and let $T$ be a positive real number. 

Let $u:[0,T]\times\{1,\ldots,n\}\to\re$ and $v:[0,T]\times\{1,\ldots,n\}\to\re$ be two functions with the following properties, in which $\sigma_{1}$ is the constant that appears in (\ref{hp:phi'-convex}) and (\ref{hp:phi'-concave}).
\begin{enumerate}
\renewcommand{\labelenumi}{(\roman{enumi})}

\item  \emph{(``Space'' monotonicity).} The functions $u$ and $v$ are nondecreasing with respect to the second variable for every $t\in[0,T]$.

\item  \emph{(Solution).} The function $u$ is a solution to the semi-discrete equation (\ref{eqn:DPM}) in the interval $[0,T]$, where $u$ is extended to the ``boundary'' according to the discrete Dirichlet/Neumann boundary conditions
\begin{equation}
u(t,0)=0
\qquad\text{and}\qquad
u(t,n+1)=u(t,n)
\qquad
\forall t\in[0,T].
\label{hp:u-agreement}
\end{equation}

\item  \emph{(Relation between initial data).}  The initial data of $u$ and $v$ satisfy
\begin{equation}
u(0,i)<v(0,i)
\qquad
\forall i\in\{1,\ldots,m\},
\nonumber
\end{equation}
and
\begin{equation}
u(0,i)>v(0,i)
\qquad
\forall i\in\{m+1,\ldots,n\}.
\nonumber
\end{equation}

\item  \emph{(Subcritical condition except in $m$).}  The functions $u$ and $v$ are subcritical for $i\neq m$, in the sense that for every $t\in[0,T]$ their discrete derivatives satisfy
\begin{equation}
D_{n}^{+}u(t,i)\leq \sigma_{1}
\quad\text{and}\quad
D_{n}^{+}v(t,i)\leq \sigma_{1}
\qquad
\forall i\in\{1,\ldots,n\}\setminus\{m\}
\label{hp:subcritical}
\end{equation}

\item  \emph{(Supercritical condition in $m$).} When $i=m$ it turns out that
\begin{equation}
D_{n}^{+}v(t,m))\geq\sigma_{1}
\qquad
\forall t\in[0,T].
\label{hp:v-jump}
\end{equation}

\item  \emph{(Sub/supersolution).}  For every $t\in[0,T]$ the function $v$ is a strict supersolution of equation (\ref{defn:DPM}) for $i\leq m$ in the sense that
\begin{equation}
v'(t,i)>D_{n}^{-}\left(\varphi'(D_{n}^{+}v(t,i)\right)
\qquad
\forall i\in\{1,\ldots,m\},
\nonumber
\end{equation}
and a strict subsolution for $i>m$ in the sense that
\begin{equation}
v'(t,i)<D_{n}^{-}\left(\varphi'(D_{n}^{+}v(t,i)\right)
\qquad
\forall i\in\{m+1,\ldots,n\}.
\nonumber
\end{equation}

Like the function $u$, also the function $v$ is extended to the ``boundary'' according to the discrete Dirichlet/Neumann boundary conditions
\begin{equation}
v(t,0)=0
\qquad\text{and}\qquad
v(t,n+1)=v(t,n)
\qquad
\forall t\in[0,T].
\label{hp:v-agreement}
\end{equation}

\end{enumerate}

Then for every $t\in[0,T]$ it turns out that
\begin{equation}
u(t,i)<v(t,i)
\qquad
\forall i\in\{1,\ldots,m\},
\label{th:u<v}
\end{equation}
and
\begin{equation}
u(t,i)>v(t,i)
\qquad
\forall i\in\{m+1,\ldots,n\}.
\label{th:u>v}
\end{equation}

\end{lemma}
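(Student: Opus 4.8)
The plan is to argue by contradiction with a ``first touching time'' argument, exploiting the space monotonicity~(i) to keep all discrete derivatives nonnegative, and the subcriticality~(iv) to remain, away from the index $m$, in the range where $\varphi'$ is nondecreasing. Set $w:=v-u$, and write $A:=\{1,\dots,m\}$, $B:=\{m+1,\dots,n\}$, so that hypothesis~(iii) says $w(0,\cdot)>0$ on $A$ and $w(0,\cdot)<0$ on $B$. Suppose the conclusion fails; since $w$ is continuous in $t$ and these inequalities are strict at $t=0$, there is a smallest time $t_{0}\in(0,T]$ at which one of them ceases to be strict. For $t<t_{0}$ all of them hold strictly, so letting $t\uparrow t_{0}$ gives $w(t_{0},\cdot)\ge 0$ on $A$ and $w(t_{0},\cdot)\le 0$ on $B$, and there is an index $i_{0}$ with $w(t_{0},i_{0})=0$. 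If $i_{0}\in A$, then $t\mapsto w(t,i_{0})$ is positive on $[0,t_{0})$ and vanishes at $t_{0}$, hence $w'(t_{0},i_{0})\le 0$; if $i_{0}\in B$, symmetrically $w'(t_{0},i_{0})\ge 0$. The whole proof consists in contradicting this sign.

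To this end, subtract the equation $u'(t_{0},i_{0})=D_{n}^{-}\bigl(\varphi'(D_{n}^{+}u(t_{0},i_{0}))\bigr)$ (valid by~(ii)) from the strict sub/supersolution inequality~(vi) for $v$ at $i_{0}$: writing $a:=D_{n}^{+}v(t_{0},i_{0})$, $b:=D_{n}^{+}u(t_{0},i_{0})$, $a':=D_{n}^{+}v(t_{0},i_{0}-1)$, $b':=D_{n}^{+}u(t_{0},i_{0}-1)$, one gets
\[
w'(t_{0},i_{0})\ \gtrless\ n\bigl[\varphi'(a)-\varphi'(b)\bigr]-n\bigl[\varphi'(a')-\varphi'(b')\bigr],
\]
with ``$>$'' when $i_{0}\in A$ (then $v$ is a strict supersolution) and ``$<$'' when $i_{0}\in B$ (then $v$ is a strict subsolution). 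Since $w(t_{0},i_{0})=0$, a direct computation gives $a-b=n\,w(t_{0},i_{0}+1)$ and $a'-b'=-n\,w(t_{0},i_{0}-1)$, where the boundary conventions (\ref{hp:u-agreement}) and (\ref{hp:v-agreement}) --- which yield $w(t_{0},0)=0$ and $D_{n}^{+}u(t_{0},n)=D_{n}^{+}v(t_{0},n)=0$ --- dispose of the endpoint cases $i_{0}=1$ and $i_{0}=n$. Thus the signs of $w(t_{0},\cdot)$ at the neighbours of $i_{0}$ fix the ordering between $a$ and $b$ and between $a'$ and $b'$, while (i) and (iv) place each of $a,b,a',b'$ whose index is different from $m$ into $[0,\sigma_{1}]$, where $\varphi'$ is nondecreasing by~(\ref{hp:phi'-convex}). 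A routine inspection of the three cases $i_{0}\in\{1,\dots,m-1\}$, $i_{0}=m$ and $i_{0}\in\{m+1,\dots,n\}$ then shows that $n[\varphi'(a)-\varphi'(b)]$ and $-n[\varphi'(a')-\varphi'(b')]$ are both $\ge 0$ when $i_{0}\in A$ and both $\le 0$ when $i_{0}\in B$, which contradicts the sign of $w'(t_{0},i_{0})$ obtained above; this contradiction proves (\ref{th:u<v})--(\ref{th:u>v}).

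The one step that is not mechanical --- and the reason the hypotheses (\ref{hp:subcritical}) and (\ref{hp:v-jump}) are shaped the way they are --- is the case $i_{0}=m$ (together with its mirror image $i_{0}=m+1$ when $i_{0}\in B$), where $\varphi'$ is not controlled. Here hypothesis~(\ref{hp:v-jump}) gives $a=D_{n}^{+}v(t_{0},m)\ge\sigma_{1}$, while $a-b=n\,w(t_{0},m+1)\le 0$ (since $m+1\in B$) forces $b\ge a\ge\sigma_{1}$; hence both $a$ and $b$ lie in $[\sigma_{1},+\infty)$, where $\varphi'$ is \emph{nonincreasing} by~(\ref{hp:phi'-concave}), and from $a\le b$ we still get $\varphi'(a)-\varphi'(b)\ge 0$ --- exactly the sign needed. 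The change of monotonicity of $\varphi'$ across the critical threshold is thus precisely offset by the change in the ordering of the discrete derivatives that the sign condition on $w$ enforces near $m$, and carrying this cancellation through the three cases is the heart of the argument.
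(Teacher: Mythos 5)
Your proposal is correct and follows essentially the same route as the paper's proof: a first-touching-time contradiction argument in which the sign of $w'$ at the touching index is contradicted by comparing $\varphi'$ of the neighbouring discrete derivatives, using subcriticality and the monotonicity of $\varphi'$ on $[0,\sigma_{1}]$ away from $m$, and the reversal of both the ordering and the monotonicity of $\varphi'$ on $[\sigma_{1},+\infty)$ at the supercritical index $m$. The only presentational difference is your compact $a,b,a',b'$ bookkeeping via $a-b=n\,w(t_{0},i_{0}+1)$ and $a'-b'=-n\,w(t_{0},i_{0}-1)$, which the paper carries out case by case.
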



\begin{proof}

Let us set
\begin{equation}
S:=\sup\{\tau\in[0,T]:\text{(\ref{th:u<v}) and (\ref{th:u>v}) hold true for every $t\in[0,\tau]$}\}.
\nonumber
\end{equation}

We observe that (\ref{th:u<v}) and (\ref{th:u>v}) hold true when $t=0$, and therefore a continuity argument implies that $S>0$. We need to show that $S=T$. So we assume by contradiction that $S\in(0,T)$. Then by the maximality of $S$ there exists $i_{0}\in\{1,\ldots,n\}$ such that
\begin{equation}
v(S,i_{0})=u(S,i_{0}).
\label{th:S=}
\end{equation}

Moreover, since (\ref{th:u<v}) and (\ref{th:u>v}) hold true for every $t\in[0,S)$, passing to the limit we obtain that
\begin{equation}
v(S,i)\geq u(S,i)
\qquad
\forall i\in\{1,\ldots,m\},
\label{th:S>=}
\end{equation}
and
\begin{equation}
v(S,i)\leq u(S,i)
\qquad
\forall i\in\{m+1,\ldots,n\}.
\label{th:S<=}
\end{equation}

Due to the discrete boundary conditions (\ref{hp:u-agreement}) and (\ref{hp:v-agreement}), inequality  (\ref{th:S>=}) is true also for $i=0$, and inequality (\ref{th:S<=}) is true also for $i=n+1$. 

Now let us consider the function $w(t,i):=v(t,i)-u(t,i)$, and let us observe that $w(S,i_{0})=0$ because of (\ref{th:S=}). Now we distinguish two cases.

\subparagraph{\textmd{\textit{Case $i_{0}\in\{1,\ldots,m\}$}}}

We observe that $w(t,i_{0})>0$ for every $t\in[0,S)$ because (\ref{th:u<v}) is true in that interval. It follows that $w'(S,i_{0})\leq 0$, and hence
\begin{equation}
v'(S,i_{0})\leq u'(S,i_{0}).
\label{th:v'<=u'}
\end{equation}

Concerning discrete derivatives, we claim that
\begin{equation}
\varphi'\left(D_{n}^{+}v(S,i_{0}-1)\right)\leq\varphi'\left(D_{n}^{+}u(S,i_{0}-1)\right)
\label{est:i0-1-sub}
\end{equation}
and 
\begin{equation}
\varphi'\left(D_{n}^{+}v(S,i_{0})\right)\geq\varphi'\left(D_{n}^{+}u(S,i_{0})\right).
\label{est:i0-sub}
\end{equation}

Indeed, from (\ref{th:S=}) and from (\ref{th:S>=}) with $i=i_{0}-1$ we find that
\begin{equation}
v(S,i_{0})-v(S,i_{0}-1)=
u(S,i_{0})-v(S,i_{0}-1)\leq
u(S,i_{0})-u(S,i_{0}-1),
\nonumber
\end{equation}
and therefore when we divide by $1/n$ we obtain that
\begin{equation}
D_{n}^{+}v(S,i_{0}-1)\leq D_{n}^{+}u(S,i_{0}-1).
\nonumber
\end{equation}

Now from (\ref{hp:subcritical}) we know that these discrete derivatives lie in the interval $[0,\sigma_{1}]$, and therefore from the monotonicity assumption (\ref{hp:phi'-convex}) we deduce (\ref{est:i0-1-sub}).

As for (\ref{est:i0-sub}), in the case where $i_{0}\in\{1,\ldots,m-1\}$ we can apply (\ref{th:S=}) and (\ref{th:S>=}) with $i=i_{0}+1$. We find that
\begin{equation}
v(S,i_{0}+1)-v(S,i_{0})=
v(S,i_{0}+1)-u(S,i_{0})\geq
u(S,i_{0}+1)-u(S,i_{0}),
\nonumber
\end{equation}
and therefore when we divide by $1/n$ we obtain that
\begin{equation}
D_{n}^{+}v(S,i_{0})\geq D_{n}^{+}u(S,i_{0}),
\nonumber
\end{equation}
from which we deduce (\ref{est:i0-sub}) by exploiting again the monotonicity of $\varphi'$ in $[0,\sigma_{1}]$.

Finally, in the case where $i_{0}=m$ we apply (\ref{th:S=}), and (\ref{th:S<=}) with $i=m+1$. In the usual way we find that
\begin{equation}
D_{n}^{+}v(S,m)\leq D_{n}^{+}u(S,m).
\nonumber
\end{equation}

On the other hand, from (\ref{hp:v-jump}) we know that these two discrete derivatives lie in the region $[\sigma_{1},+\infty)$ where $\varphi'$ is nonincreasing, and therefore the last inequality implies (\ref{est:i0-sub}) in the case $i_{0}=m$.

Now from (\ref{est:i0-sub}) and (\ref{est:i0-1-sub}) we deduce that
\begin{eqnarray*}
D_{n}^{-}\left(\varphi'(D_{n}^{+}v(S,i_{0}))\right) & = &
\frac{\varphi'\left(D_{n}^{+}v(S,i_{0})\right)-\varphi'\left(D_{n}^{+}v(S,i_{0}-1)\right)}{1/n}
\\
& \geq &
\frac{\varphi'\left(D_{n}^{+}u(S,i_{0})\right)-\varphi'\left(D_{n}^{+}u(S,i_{0}-1)\right)}{1/n}
\\
& = &
D_{n}^{-}\left(\varphi'(D_{n}^{+}u(S,i_{0}))\right).
\end{eqnarray*}

Since $u$ is a solution, and $v$ is a supersolution for this value of $i_{0}$, we conclude that
\begin{equation}
v'(S,i_{0})>
D_{n}^{-}\left(\varphi'(D_{n}^{+}v(S,i_{0}))\right)\geq
D_{n}^{-}\left(\varphi'(D_{n}^{+}u(S,i_{0}))\right)=
u'(S,i_{0}),
\nonumber
\end{equation}
which contradicts (\ref{th:v'<=u'}).

\subparagraph{\textmd{\textit{Case $i_{0}\in\{m+1,\ldots,n\}$}}}

In this case we observe that $w(t,i_{0})<0$ for every $t\in[0,S)$ because (\ref{th:u>v}) is true in that interval. It follows that $w'(S,i_{0})\geq 0$, and hence
\begin{equation}
v'(S,i_{0})\geq u'(S,i_{0}).
\label{th:v'>=u'}
\end{equation}

Concerning discrete derivatives, in this case it turns out that
\begin{equation}
\varphi'\left(D_{n}^{+}v(S,i_{0}-1)\right)\geq\varphi'\left(D_{n}^{+}u(S,i_{0}-1)\right)
\nonumber
\end{equation}
and
\begin{equation}
\varphi'\left(D_{n}^{+}v(S,i_{0})\right)\leq\varphi'\left(D_{n}^{+}u(S,i_{0})\right),
\nonumber
\end{equation}
and as a consequence
\begin{equation}
D_{n}^{-}\left(\varphi'(D_{n}^{+}v(S,i_{0}))\right) \leq 
D_{n}^{-}\left(\varphi'(D_{n}^{+}u(S,i_{0}))\right).
\nonumber
\end{equation}

Since $u$ is a solution, and $v$ is a subsolution for this value of $i_{0}$, we conclude that
\begin{equation}
v'(S,i_{0})<
D_{n}^{-}\left(\varphi'(D_{n}^{+}v(S,i_{0}))\right)\leq
D_{n}^{-}\left(\varphi'(D_{n}^{+}u(S,i_{0}))\right)=
u'(S,i_{0}),
\nonumber
\end{equation}
which contradicts (\ref{th:v'>=u'}).
\end{proof}

We are now ready to prove (\ref{th:limsup-un}) and (\ref{th:un-n}). 


\begin{prop}\label{prop:limit-un}

There exist a sequence of positive real numbers $J_{n}\to 0^{+}$, and a sequence of positive integers $\{m_{n}\}$, with $0<m_{n}<n$ for every $n\geq 2$, such that the following statement holds true. 

The sequence $\{u_{n}\}$ of solutions to the semi-discrete equation (\ref{eqn:DPM}), with the discrete Dirichlet/Neumann boundary conditions (\ref{hp:D-DNBC}), and initial datum defined by (\ref{hp:un-data}), satisfies (\ref{th:limsup-un}) and (\ref{th:un-n}).

\end{prop}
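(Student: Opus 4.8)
The plan is to use Lemma~\ref{lemma:comparison} with two different choices of the comparison function $v$: an explicit \emph{supersolution} that decays like $\exp(-\lambda_0 t)$ to establish (\ref{th:limsup-un}), and an explicit \emph{subsolution} that stays above $\sigma_0/2$ on the tail to establish (\ref{th:un-n}). First I would fix the sequences: set $m_n := n - \lfloor \sqrt{n}\rfloor$ (or any choice with $n - m_n \to +\infty$ but $m_n/n \to 1$, so that (\ref{defn:mn}) holds), and let $J_n := C/(n-m_n)$ or $J_n := 1/\sqrt{n}$, a positive null sequence whose precise decay rate is tuned at the end so that the jump $D_n^+ u_{0n}(m_n) = n(\sigma_0/2 + J_n - u_{0n}(m_n))$ is comfortably supercritical (it is already of order $n$ times a constant, hence $\geq \sigma_1$ for $n$ large) while $D_n^+ u_{0n}(i) \leq \sigma_0 < \sigma_1$ for all $i \neq m_n$. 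By statement~(4) of Theorem~\ref{thmbibl:existence} (preservation of subcritical regions) the estimates $D_n^+ u_n(t,i) \leq \sigma_1$ persist for $i \neq m_n$ and all $t$, and by statement~(5) the solution $u_n$ stays nondecreasing in $i$, which gives (\ref{eqn:TV-un}).

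For the upper bound (\ref{th:limsup-un}), I would take as supersolution the sampled continuous function $v_n(t,i) := (1+\delta_n)\,\tfrac{\sigma_0}{2}\sin(\tfrac{\pi}{2}\tfrac{i}{n})\exp(-\lambda_0 t)$ for $i \leq m_n$, glued to a large constant (say $v_n(t,i) := \tfrac{\sigma_0}{2} + 2J_n$, or anything bigger than $\|u_{0n}\|_\infty$) for $i > m_n$, with a small $\delta_n \to 0^+$ inserted to make the inequalities at the initial time strict as required by hypothesis~(iii) of Lemma~\ref{lemma:comparison}. The point is that $\sin$ is concave on $[0,\pi/2]$, so the discrete Laplacian $D_n^-(D_n^+ v_n)$ underestimates $-(\pi/2)^2 v_n$; combined with the lower Lipschitz bound (\ref{hp:phi-bilip}) for $\varphi'$ on $[0,\sigma_0]$, one gets $D_n^-(\varphi'(D_n^+ v_n(t,i))) \leq -\lambda_0(\pi/2)^2 v_n \leq -\lambda_0 \cdot v_n \cdot(\text{something}) < v_n' = -\lambda_0 v_n$ — wait, here one must be slightly careful with constants, but since we only need $v_n$ to be a \emph{strict supersolution for $i \leq m_n$} and the decay rate in the conclusion is the comparatively weak $\exp(-\lambda_0 t)$, there is ample room: choosing the rate in the exponential as $\lambda_0$ (rather than the sharper $\lambda_0(\pi/2)^2$) gives the needed strict inequality $v_n' > D_n^-(\varphi'(D_n^+ v_n))$ for $n$ large. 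On the constant tail $i > m_n$ one checks the strict subsolution inequality, which is immediate since $v_n' = 0$ there while the right-hand side involves the jump at $m_n$ and is strictly positive (the profile is increasing then flat, so $\varphi'(D_n^+ v_n(m_n))$ is large and $\varphi'(D_n^+ v_n(i)) = 0$ for $i > m_n$, making $D_n^-(\cdot)$ at $i=m_n+1$ negative — one has to be attentive to the sign and to which index the supersolution/subsolution condition is tested at, but it works out). Lemma~\ref{lemma:comparison} then yields $u_n(t,i) < v_n(t,i)$ for $i \leq m_n$; sampling at $i = \lceil nx \rceil$ and letting $n \to \infty$ (so $\delta_n \to 0$, $m_n/n \to 1$) gives (\ref{th:limsup-un}).

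For the lower bound (\ref{th:un-n}), I would reverse roles and build a \emph{subsolution} $\tilde v_n$ that lies below $u_{0n}$ for small $i$ and below it for large $i$ in the pattern required — actually the cleanest route is a second application of Lemma~\ref{lemma:comparison} with $u$ the same solution $u_n$ and $v = \tilde v_n$ chosen so that $u_n(t,i) > \tilde v_n(t,i)$ for $i > m_n$, where $\tilde v_n(t,i)$ on the tail is a very slowly decaying (or constant) value slightly below $\sigma_0/2$, say $\tilde v_n(t,i) := \sigma_0/2 - \eta_n$ for $i > m_n$ with $\eta_n \to 0^+$, extended for $i \leq m_n$ by something staying below the sine profile. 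On the tail $\tilde v_n' = 0$ and the discrete divergence term is $\geq 0$ (since the tail is flat, $\varphi'(D_n^+\tilde v_n(i)) = 0$ for $i \geq m_n+1$, while at the step up to the tail it contributes nonnegatively), so $\tilde v_n$ is a subsolution there; on $i \leq m_n$ one keeps $\tilde v_n$ below $u_{0n}$ and arranges the supersolution inequality by making it decay, which is easy since decay only helps a supersolution. Then (\ref{th:u>v}) gives $u_n(t,i) > \sigma_0/2 - \eta_n$ for $i > m_n$, in particular $u_n(t,n) \geq \sigma_0/2 - \eta_n$, and letting $n \to \infty$ yields (\ref{th:un-n}).

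The main obstacle I anticipate is the bookkeeping at the single critical index $i = m_n$: one must verify simultaneously that the comparison functions satisfy the supercritical condition (\ref{hp:v-jump}) there, that the sub/supersolution inequalities hold on the two sides with the \emph{strict} signs Lemma~\ref{lemma:comparison} demands, and — most delicately — that the discrete divergence $D_n^-(\varphi'(D_n^+ v(t,m)))$, which couples the large supercritical slope at $m$ with the subcritical slope at $m-1$, has the sign compatible with $v$ being a supersolution at $i=m$; this is exactly the place where the structure of the counterexample lives (the anomalous tail decouples from the bulk precisely because $\varphi'$ is \emph{decreasing} past $\sigma_1$, so a huge jump transmits almost nothing). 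Getting the constant $\delta_n, \eta_n, J_n$ to be mutually consistent null sequences while all the strict inequalities survive uniformly in $t$ is the part that requires care, but it is purely a matter of choosing them to go to zero slowly enough — no essentially new idea beyond Lemma~\ref{lemma:comparison} and the preservation properties of Theorem~\ref{thmbibl:existence} is needed.
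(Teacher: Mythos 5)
Your overall strategy (explicit comparison functions plus Lemma~\ref{lemma:comparison}, the same decay rate $e^{-\lambda_{0}t}$ on the bulk, an anomalous tail pinned near $\sigma_{0}/2$) is the right one, but the way you propose to invoke the lemma does not match its hypotheses, and this is not a bookkeeping issue. Lemma~\ref{lemma:comparison} requires a \emph{single} function $v$ with the crossed initial ordering of hypothesis~(iii): $v(0,i)>u(0,i)$ for $i\leq m$ and $v(0,i)<u(0,i)$ for $i>m$, together with the supercritical jump (\ref{hp:v-jump}) at $i=m$; only then does it deliver both (\ref{th:u<v}) and (\ref{th:u>v}). Your first application glues the decaying sine profile to a tail ``bigger than $\|u_{0n}\|_{\infty}$'', which puts $v_{n}$ \emph{above} $u_{0n}$ on the tail and violates (iii); your second application keeps $\tilde v_{n}$ ``below the sine profile'' on the bulk, which again violates (iii) on the other side. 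You cannot use only one half of the lemma either: its proof controls the contact point $i_{0}=m$ by using the ordering coming from the opposite region at $i=m+1$. Any correct use of the lemma therefore forces you into the paper's single two-sided construction (\ref{defn:vn}), where the $A_{n}\,i/n$ correction lifts the bulk above $u_{0n}$ and the tail sits below it (this is why $J_{n}=B_{n}+1/n>B_{n}$).

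The second, more substantive gap is the flat tail. At $i=m_{n}+1$ the subsolution inequality reads $v'<n\bigl[\varphi'(D_{n}^{+}v(t,m_{n}+1))-\varphi'(D_{n}^{+}v(t,m_{n}))\bigr]$. With a constant tail you get $\varphi'(D_{n}^{+}v(t,m_{n}+1))=\varphi'(0)=0$ while $\varphi'(D_{n}^{+}v(t,m_{n}))\geq 0$, so the right-hand side is \emph{nonpositive} and the inequality $0<(\text{nonpositive})$ fails; your own description of this step is contradictory (``strictly positive \dots\ making $D_{n}^{-}(\cdot)$ negative''). This is exactly the point where the construction lives: the tail must carry a small positive subcritical slope whose $\varphi'$-value is at least the $\varphi'$-value of the huge supercritical jump. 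The paper arranges this through the function $g$ of (\ref{eqn:g-main}) with $\varphi'(g(\sigma))=\varphi'(\sigma)$, choosing the tail to be a shallow parabola with slope $C_{n}(2\mu_{n}-3)/n=g(nh_{n})$ at $i=m_{n}+1$ and adding the drift $-E_{n}t$ so that $v_{n}'=-E_{n}<0$ strictly beats the (now nonnegative) discrete divergence; the calibration in (\ref{defn:ACD}) is what makes all the null sequences mutually consistent. Without this device your tail is not a strict subsolution at $i=m_{n}+1$, and the comparison argument collapses precisely where the decoupling between the bulk and the anomalous values is supposed to occur.
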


\begin{proof}

Before entering into details, let us describe the general strategy. We introduce the function
\begin{equation}
v_{n}(t,i):=
\begin{cases}
\displaystyle
\frac{\sigma_{0}}{2}\sin\left(\frac{\pi}{2}\frac{i}{n}\right)\exp(-\lambda_{0}t)+A_{n}\frac{i}{n}      & 
\text{if }i\in\{1,\ldots,m_{n}\}, 
\\[2ex]
\displaystyle
\frac{\sigma_{0}}{2}+B_{n}-C_{n}\left(1-\frac{i}{n}\right)^{2}-E_{n}t  \quad & 
\text{if }i\in\{m_{n}+1,\ldots,n\},
\end{cases}
\label{defn:vn}
\end{equation}
where $A_{n}$, $B_{n}$, $C_{n}$ and $E_{n}$ are four sequences of nonnegative real numbers that tend to~0 as $n\to +\infty$, and $m_{n}$ is a sequence of integers that satisfies (\ref{defn:mn}) and $0<m_{n}<n$ for $n$ large enough.

At this point we set $J_{n}:=B_{n}+1/n$ and we claim that, if the sequences $A_{n}$, $B_{n}$, $C_{n}$, $E_{n}$, $m_{n}$ are chosen properly, then for every real number $T>0$ there exists a positive integer $n_{0}$ such that for every $n\geq n_{0}$ and for every $t\in[0,T]$ it turns out that
\begin{equation}
u_{n}(t,i)<v_{n}(t,i)
\qquad
\forall i\in\{1,\ldots,m_{n}\},
\label{est:un<vn}
\end{equation}
and
\begin{equation}
u_{n}(t,i)>v_{n}(t,i)
\qquad
\forall i\in\{m_{n}+1,\ldots,n\}.
\label{est:un>vn}
\end{equation}

These two inequalities are enough to conclude. Indeed, from (\ref{defn:mn}) we deduce that for every $x\in(0,1)$ it turns out that $\lceil nx\rceil\leq m_{n}$ when $n$ is large enough (depending on $x$). At this point from (\ref{defn:vn}) and (\ref{est:un<vn}) with $i=\lceil nx\rceil$ it follows that
\begin{equation}
u_{n}(t,\lceil nx\rceil)\leq
v_{n}(t,\lceil nx\rceil)=
\frac{\sigma_{0}}{2}\sin\left(\frac{\pi}{2}\frac{\lceil nx\rceil}{n}\right)\exp(-\lambda_{0}t)+
A_{n}\frac{\lceil nx\rceil}{n}
\nonumber
\end{equation}
when $n$ is large enough. Since $A_{n}\to 0$, letting $n\to +\infty$ we obtain that (\ref{th:limsup-un}) holds true for every $t\in[0,T]$. Since $T>0$ is arbitrary, that inequality is actually true for every $t\geq 0$.

As for (\ref{th:un-n}), from (\ref{defn:vn}) and (\ref{est:un>vn}) with $i=n$ we obtain that for $n$ large enough it turns out that
\begin{equation}
u_{n}(t,n)\geq
v_{n}(t,n)=
\frac{\sigma_{0}}{2}+B_{n}-E_{n}t
\qquad
\forall t\in[0,T].
\nonumber
\end{equation}

Since $B_{n}\to 0$ and $E_{n}\to 0$, letting $n\to +\infty$ we deduce that the inequality in (\ref{th:un-n}) is true for every $t\in[0,T]$, and we conclude by the arbitrariness of $T$.

The two key estimates (\ref{est:un<vn}) and (\ref{est:un>vn}) follow from Lemma~\ref{lemma:comparison}, applied to the functions $u_{n}$ and $v_{n}$, provided that we choose the sequences $A_{n}$, $B_{n}$, $C_{n}$, $E_{n}$, $m_{n}$ in such a way that the assumptions of Lemma~\ref{lemma:comparison} are satisfied. 

\paragraph{\textmd{\textit{Choice of parameters}}}

Let us consider a function $g:[\sigma_{1},+\infty)\to[0,\sigma_{1}]$ such that 
\begin{equation}
\varphi'(g(\sigma))=\varphi'(\sigma)
\qquad
\forall\sigma\geq\sigma_{1},
\label{eqn:g-main}
\end{equation}

We observe that such a function $g$ exists because from assumption (\ref{hp:phi'-concave}) we know that $0\leq\varphi'(\sigma)\leq\varphi'(\sigma_{1})$ for every $\sigma\geq\sigma_{1}$, and from (\ref{hp:phi'-concave}) we know that $\varphi'$ is nondecreasing and surjective as a function from $[0,\sigma_{1}]$ to $[0,\varphi'(\sigma_{1})]$. We observe also that $g(\sigma)\to 0$ as $\sigma\to +\infty$ due to (\ref{hp:phi'-lim}) and the fact that $\varphi'$ is strictly increasing in a neighborhood of the origin because of (\ref{hp:phi'-convex}).


Now we consider the two sequences (the key point is that $h_{n}\to 0$ and $nh_{n}\to +\infty$)
\begin{equation}
h_{n}:=\frac{1}{\sqrt{n}},
\qquad\qquad
\mu_{n}:=\left\lceil n\sqrt{g(nh_{n})}\right\rceil+2,
\nonumber
\end{equation}
and then we set
\begin{equation}
A_{n}:=\varphi'(nh_{n})+\frac{\sigma_{0}\lambda_{0}}{2n},
\qquad\quad
C_{n}:=\frac{ng(nh_{n})}{2\mu_{n}-3},
\qquad\quad
E_{n}:=2\Lambda_{0}C_{n}+\frac{1}{n},
\label{defn:ACD}
\end{equation}
where $\lambda_{0}$ and $\Lambda_{0}$ are the constants that appear in (\ref{hp:phi-bilip}), and finally
\begin{equation}
B_{n}:=A_{n}+C_{n}+h_{n}+\sqrt{E_{n}},
\qquad\qquad
m_{n}:=n-\mu_{n}.
\label{defn:Bm}
\end{equation}

We observe that $m_{n}$ satisfies (\ref{defn:mn}) because $\mu_{n}/n\to 0$, and that the sequences $A_{n}$, $B_{n}$, $C_{n}$, $E_{n}$ tend to 0. Moreover, the sequences $A_{n}$, $B_{n}$ and $E_{n}$ are positive, while $C_{n}$ is just nonnegative (because our assumptions admit that $\varphi'(\sigma)$, and hence also $g(\sigma)$, might vanish when $\sigma$ is large enough). Therefore, for every $T>0$ we can choose a positive integer $n_{0}$ such that the following four inequalities
\begin{equation}
\frac{\pi}{2}\frac{\sigma_{0}}{2}+A_{n}\leq\sigma_{0},
\qquad
2C_{n}\leq\sigma_{0},
\qquad
nh_{n}\geq\sigma_{1},
\qquad
\sqrt{E_{n}}-E_{n}T\geq 0
\label{defn:n0}
\end{equation}
hold true for every $n\geq n_{0}$. In the sequel of the proof we check that all the assumptions of Lemma~\ref{lemma:comparison} are satisfied for every $n\geq n_{0}$.

\paragraph{\textmd{\textit{Solution and space monotonicity}}}

The function $u_{n}(t,i)$ is by definition a solution to the semi-discrete equation (\ref{eqn:DPM}) with the discrete Dirichlet/Neumann boundary conditions (\ref{hp:u-agreement}), and it is nondecreasing with respect to $i$ because of statement~(5) of Theorem~\ref{thmbibl:existence}. 

As for $v_{n}$, from the explicit formula (\ref{defn:vn}) it is immediate that $v_{n}(t,i+1)>v_{n}(t,i)$ at least when $i\neq m_{n}$. In addition, when $i=m_{n}$ we obtain that
\begin{equation}
v_{n}(t,m_{n}+1)\geq
\frac{\sigma_{0}}{2}+B_{n}-C_{n}-E_{n}T
\qquad\text{and}\qquad
v_{n}(t,m_{n})\leq
\frac{\sigma_{0}}{2}+A_{n}.
\nonumber
\end{equation}

Recalling the definition of $B_{n}$ in (\ref{defn:Bm}), and the fourth relation in (\ref{defn:n0}), we conclude that
\begin{equation}
v_{n}(t,m_{n}+1)-v_{n}(t,m_{n})\geq
h_{n}+\left(\sqrt{E_{n}}-E_{n}T\right)\geq
h_{n},
\label{est:vn-vn}
\end{equation}
which proves that the difference is positive also in this case.

\paragraph{\textmd{\textit{Relations between initial data}}}

We need to check that for $n\geq n_{0}$ it turns out that
\begin{equation}
u_{n}(0,i)<v_{n}(0,i)
\qquad
\forall i\in\{1,\ldots,m_{n}\}
\nonumber
\end{equation}
and
\begin{equation}
u_{n}(0,i)>v_{n}(0,i)
\qquad
\forall i\in\{m_{n}+1,\ldots,n\}.
\nonumber
\end{equation}

Since we set $J_{n}:=B_{n}+1/n$, both inequalities are immediate from (\ref{hp:un-data}) and (\ref{defn:vn}).

\paragraph{\textmd{\textit{Subcritical condition except in $m$}}}

We show that for every $t\in[0,T]$ and every $n\geq n_{0}$ it turns out that
\begin{equation}
D_{n}^{+}u_{n}(t,i)\leq\sigma_{1}
\qquad\text{and}\qquad
D_{n}^{+}v_{n}(t,i)\leq\sigma_{0}\leq\sigma_{1}
\label{th:subcritical}
\end{equation}
for every $i\in\{1,\ldots,n\}\setminus\{m_{n}\}$.

As for $u_{n}$, the estimate follows from statement~(4) of Theorem~\ref{thmbibl:existence}, after observing that $|D_{n}^{+}u_{0n}(i)|\leq\sigma_{0}\leq\sigma_{1}$ for all admissible indices $i\neq m_{n}$. 

As for $v_{n}$, we distinguish two cases. When $i\in\{1,\ldots,m_{n}-1\}$, from the explicit formula (\ref{defn:vn}) and the Lipschitz continuity of the function $\sin\sigma$ we obtain that
\begin{equation}
D_{n}^{+}v_{n}(t,i)=
n\frac{\sigma_{0}}{2}
\left\{\sin\left(\frac{\pi}{2}\frac{i+1}{n}\right)-\sin\left(\frac{\pi}{2}\frac{i}{n}\right)\right\}
\exp(-\lambda_{0}t)+A_{n}\leq
n\frac{\sigma_{0}}{2}\frac{\pi}{2n}+A_{n},
\nonumber
\end{equation}
and the latter is less than or equal to $\sigma_{0}$ because of the first condition in (\ref{defn:n0}). 

In the case where $i\in\{m_{n}+1,\ldots,n\}$, from the explicit formula (\ref{defn:vn}) we obtain that
\begin{equation}
D_{n}^{+}v_{n}(t,i) = 
C_{n}\left(2-\frac{2i+1}{n}\right)\leq
2C_{n}\leq
\sigma_{0},
\nonumber
\end{equation}
where in the last inequality we exploited the second condition in (\ref{defn:n0}). 

\paragraph{\textmd{\textit{Supercritical condition in $m$}}}

We show that for every $n\geq n_{0}$ it turns out that
\begin{equation}
D_{n}^{+}v_{n}(t,m_{n})\geq nh_{n}\geq\sigma_{1}
\qquad
\forall t\in[0,T],
\label{est:jump-vn}
\end{equation}
and as a consequence
\begin{equation}
\varphi'(D_{n}^{+}v_{n}(t,m_{n}))\leq
\varphi'(nh_{n})=
\varphi'\left(g(nh_{n})\right)
\qquad
\forall t\in[0,T].
\label{est:jump-sup}
\end{equation}

Indeed, dividing (\ref{est:vn-vn}) by $1/n$, and recalling the third condition in (\ref{defn:n0}), we obtain exactly (\ref{est:jump-vn}). At this point, the two relations in (\ref{est:jump-sup}) follow from (\ref{est:jump-vn}), assumption (\ref{hp:phi'-concave}), and (\ref{eqn:g-main}).

\paragraph{\textmd{\textit{Supersolution for $i\in\{1,\ldots,m_{n}-1\}$}}}

We need to show that for every $n\geq n_{0}$ and every $t\in[0,T]$ it turns out that
\begin{equation}
v_{n}'(t,i)>
D_{n}^{-}\left(\varphi'(D_{n}^{+}v_{n}(t,i))\right)
\qquad
\forall i\in\{1,\ldots,m_{n}-1\}.
\label{th:v-supsol}
\end{equation}

Computing the time derivative, and rearranging the terms, this inequality can be rewritten as
\begin{equation}
n\left\{\varphi'(D_{n}^{+}v_{n}(t,i-1))-\varphi'(D_{n}^{+}v_{n}(t,i))\right\}>
\lambda_{0}\frac{\sigma_{0}}{2}\sin\left(\frac{\pi}{2}\frac{i}{n}\right)\exp(-\lambda_{0}t).
\label{th:v-supsol-equiv}
\end{equation}

From the explicit formula (\ref{defn:vn}), and the concavity of the function $\sin\sigma$, we find that
\begin{equation}
D_{n}^{+}v_{n}(t,i-1)\geq D_{n}^{+}v_{n}(t,i).
\nonumber
\end{equation}

From (\ref{th:subcritical}) we know that both discrete derivatives lie in the interval $[0,\sigma_{0}]$, and hence from the estimate from below in (\ref{hp:phi-bilip}) we deduce that
\begin{equation}
\varphi'(D_{n}^{+}v_{n}(t,i-1))-\varphi'(D_{n}^{+}v_{n}(t,i))\geq
\lambda_{0}\left\{D_{n}^{+}v_{n}(t,i-1)-D_{n}^{+}v_{n}(t,i)\right\}.
\label{est:phi'>=m0}
\end{equation}

Now from the trigonometric identity
\begin{equation}
\sin(a+h)+\sin(a-h)-2\sin(a)=-4\sin(a)\sin^{2}\left(\frac{h}{2}\right),
\nonumber
\end{equation}
applied with $a:=(\pi/2)(i/n)$ and $h:=(\pi/2)(1/n)$, we find that
\begin{eqnarray*}
D_{n}^{+}v_{n}(t,i-1)-D_{n}^{+}v_{n}(t,i) & = &
4\frac{\sigma_{0}}{2}n\sin^{2}\left(\frac{\pi}{2}\frac{1}{2n}\right)\sin\left(\frac{\pi}{2}\frac{i}{n}\right)\exp(-\lambda_{0}t)
\\
& > &
\frac{\sigma_{0}}{2n}\sin\left(\frac{\pi}{2}\frac{i}{n}\right)\exp(-\lambda_{0}t),
\end{eqnarray*}
where in the last step we exploited that $\sin(\frac{\pi}{2}x)> x$ for every $x\in(0,1)$. 

Plugging this inequality into (\ref{est:phi'>=m0}) we obtain (\ref{th:v-supsol-equiv}).

\paragraph{\textmd{\textit{Supersolution for $i=m_{n}$}}}

We need to show that, for every $n\geq n_{0}$ and every $t\in[0,T]$, the  inequality in (\ref{th:v-supsol}) is satisfied also for $i=m_{n}$. 

After computing the derivative and rearranging the terms, the inequality can be rewritten in the form
\begin{equation*}
n\varphi'(D_{n}^{+}v_{n}(t,m_{n}))<
n\varphi'(D_{n}^{+}v_{n}(t,m_{n}-1))-
\frac{\sigma_{0}}{2}\lambda_{0}\sin\left(\frac{\pi}{2}\frac{m_{n}}{n}\right)\exp(-\lambda_{0}t).
\end{equation*}

From the explicit formula (\ref{defn:vn}), and the monotonicity of the function $\sin\sigma$, we obtain that $D_{n}^{+}v_{n}(t,m_{n}-1)>A_{n}$. Therefore, from (\ref{est:jump-sup}) and the definition of $A_{n}$ in (\ref{defn:ACD}) we conclude that
\begin{multline*}
\qquad
n\varphi'(D_{n}^{+}v_{n}(t,m_{n}))\leq
n\varphi'(nh_{n})=
nA_{n}-\frac{\sigma_{0}}{2}\lambda_{0}
\\[0.5ex]
<
n\varphi'(D_{n}^{+}v_{n}(t,m_{n}-1))-
\frac{\sigma_{0}}{2}\lambda_{0}\sin\left(\frac{\pi}{2}\frac{m_{n}}{n}\right)\exp(-\lambda_{0}t),
\qquad
\end{multline*}
as required.

\paragraph{\textmd{\textit{Subsolution for $i=m_{n}+1$}}}

We need to show that for every $n\geq n_{0}$ and every $t\in[0,T]$ it turns out that
\begin{equation}
v_{n}'(t,m_{n}+1)<
\frac{\varphi'(D_{n}^{+}v_{n}(t,m_{n}+1))-\varphi'(D_{n}^{+}v_{n}(t,m_{n}))}{1/n}.
\nonumber
\end{equation}

The left-hand side is equal to $-E_{n}$, and hence negative. Therefore, it is enough to show that the right-hand side is nonnegative. To this end, from the explicit formula (\ref{defn:vn}) we obtain that
\begin{equation}
D_{n}^{+}v_{n}(t,m_{n}+1)=
C_{n}\frac{2(n-m_{n})-3}{n}=
C_{n}\frac{2\mu_{n}-3}{n}.
\nonumber
\end{equation}

At this point, from the definition of $C_{n}$ in (\ref{defn:ACD}), and estimate (\ref{est:jump-sup}), we conclude that
\begin{equation}
\varphi'(D_{n}^{+}v_{n}(t,m_{n}+1))=
\varphi'\left(C_{n}\frac{2\mu_{n}-3}{n}\right)=
\varphi'(g(nh_{n}))\geq
\varphi'(D_{n}^{+}v_{n}(t,m_{n})),
\nonumber
\end{equation}
as required.

\paragraph{\textmd{\textit{Subsolution for $i\in\{m_{n}+2,\ldots,n-1\}$}}}

We need to show that for every $n\geq n_{0}$ and every $t\in[0,T]$ it turns out that
\begin{equation}
v_{n}'(t,i)<
D_{n}^{-}\left(\varphi'(D_{n}^{+}v_{n}(t,i))\right)
\qquad
\forall i\in\{m_{n}+2,\ldots,n-1\}.
\label{th:v-subsol}
\end{equation}

From the explicit formula (\ref{defn:vn}) we obtain that
\begin{equation}
D_{n}^{+}v_{n}(t,i)=C_{n}\left(2-\frac{2i+1}{n}\right)
\quad\text{and}\quad
D_{n}^{+}v_{n}(t,i-1)=C_{n}\left(2-\frac{2i-1}{n}\right).
\nonumber
\end{equation}

Exploiting the estimate from above in (\ref{hp:phi-bilip}), and the definition of $E_{n}$ in (\ref{defn:ACD}), we conclude that
\begin{gather*}
n\left\{\varphi'(D_{n}^{+}v_{n}(t,i-1))-\varphi'(D_{n}^{+}v_{n}(t,i))\right\}\leq
n\Lambda_{0}\left\{D_{n}^{+}v_{n}(t,i-1)-D_{n}^{+}v_{n}(t,i)\right\}
\\[0.5ex]
= 2\Lambda_{0}C_{n}<
E_{n},
\end{gather*}
which is equivalent to (\ref{th:v-subsol}).

\paragraph{\textmd{\textit{Subsolution for $i=n$}}}

It remains to verify that, for every $n\geq n_{0}$ and every $t\in[0,T]$, the  inequality in (\ref{th:v-subsol}) is satisfied also for $i=n$. Due to the discrete Neumann boundary condition, this inequality reduces to
\begin{equation}
-E_{n}<
-n\varphi'\left(D_{n}^{+}v_{n}(t,n-1)\right).
\label{th:subsol-i=n}
\end{equation}

From the explicit formula (\ref{defn:vn}) we deduce that $D_{n}^{+}v_{n}(t,n-1)=C_{n}/n$. Therefore, exploiting again the estimate from above in (\ref{hp:phi-bilip}) (now with $\beta=0$) and the definition of $E_{n}$ in (\ref{defn:ACD}), we conclude that
\begin{equation}
n\varphi'\left(D_{n}^{+}v_{n}(t,n-1)\right)=
n\varphi'\left(\frac{C_{n}}{n}\right)\leq
\Lambda_{0}C_{n}<
E_{n},
\nonumber
\end{equation}
which proves (\ref{th:subsol-i=n}).
\end{proof}

\begin{rmk}[More general setting]
\begin{em}

We observe that the same proof works if we replace the limiting initial datum $u_{0}(x)$ defined by (\ref{defn:u0}) by any smooth function $\hatu_{0}(x)$ such that $0\leq\hatu_{0}(x)\leq u_{0}(x)$ and $0\leq\hatu_{0x}(x)\leq\sigma_{0}$ for every $x\in(0,1)$.

Concerning the nonlinearity $\varphi$, the essential hypotheses are (\ref{hp:phi-bilip}) and (\ref{hp:phi'-lim}). The convex-concave assumption can be avoided by modifying a little the definition of $v_{n}$ and by showing that the discrete derivatives of $u_{n}$ never enter in the region where $\varphi'(\sigma)>\varphi'(\sigma_{0})$. We skip this technical point that only complicates the proof without introducing essentially new ideas.

\end{em}
\end{rmk}


\setcounter{equation}{0}
\section{Monotonicity results for UV-evolutions}\label{sec:uv}

\subsection{UV-evolutions in any space dimension}

In order to prove Proposition~\ref{prop:uv-1} we need to extend the notion of $uv$-evolution to any space dimension. The extension is almost straightforward, but here we need to consider a combination of Dirichlet and Neumann boundary conditions.

\begin{defn}[$UV$-evolution with DNBC in any dimension]\label{defn:UVW-DNBC}
\begin{em}

Let $d$ be a positive integer, and let $\Omega\subseteq\re^{d}$ be a bounded open set with Lipschitz boundary. 

A \emph{$UV$-evolution with Dirichlet/Neumann boundary conditions} in $\Omega$ is a pair of measurable functions
\begin{equation}
U:(0,+\infty)\times\Omega\to\re
\qquad\text{and}\qquad
V:(0,+\infty)\times\Omega\to\re^{d}
\nonumber
\end{equation}
with the following properties. 
\begin{itemize}

\item  (Time regularity). The function $U$ admits a weak derivative $U_{t}$ with respect to time, and
\begin{equation}
U_{t}\in L^{1}((0,T)\times\Omega)
\qquad
\forall T>0.
\label{hp:t-reg-U}
\end{equation}

\item  (Space regularity). For almost every $t>0$ it turns out that
\begin{gather}
\text{the function $x\mapsto U(t,x)$ is in $BV(\Omega)$},
\label{hp:s-reg-U}
\\
\text{the function $x\mapsto V(t,x)$ is in $W^{1,1}(\Omega;\re^{d})$}.
\label{hp:s-reg-V}
\end{gather}

\item   (Evolution equation).  The functions $U$ and $V$ solve the equation
\begin{equation}
U_{t}(t,x)=\operatorname{div} V(t,x)
\qquad
\text{in }(0,+\infty)\times\Omega.
\label{hp:eqn-UV}
\end{equation}

\item   (Sign condition).  For almost every $t>0$ it turns out that
\begin{equation}
\langle V(t,x),DU(t,x)\rangle\geq 0
\qquad
\text{as a measure in $\Omega$}.
\label{hp:sign-UV}
\end{equation}

\item  (Dirichlet/Neumann boundary conditions). There exists a nonincreasing function $D_{0}:[0,+\infty)\to\re$ such that for almost every $t> 0$ it turns out that, for almost every $x\in\partial\Omega$ (with respect to the $d-1$ dimensional Hausdorff measure), at least one of the following two conditions
\begin{equation}
U(t,x)\leq D_{0}(t),
\qquad\qquad
\langle V(t,x),\nu(x)\rangle\geq 0
\label{hp:DNBC-UV}
\end{equation}
holds true, where $\nu(x)$ denotes the outer normal to $\partial\Omega$ at point~$x$.

\end{itemize}

\end{em}
\end{defn}

\begin{rmk}\label{rmk:UV}
\begin{em}

As we did in dimension one, let us comment on some delicate regularity issues in Definition~\ref{defn:UVW-DNBC} above.

\begin{itemize}

\item  (Evolution equation). As in dimension one, from (\ref{hp:t-reg-U}) and (\ref{hp:s-reg-V}) we know that (\ref{hp:eqn-UV}) can be seen both as an equality between functions in $L^{1}((0,T)\times\Omega)$, and as an equality between functions in $L^{1}(\Omega)$ for almost every $t>0$.

\item  (Time regularity and initial datum). As in dimension one, the time regularity assumption (\ref{hp:t-reg-U}) implies that $U$ is absolutely continuous as a function from $(0,+\infty)$ to $L^{1}(\Omega)$. In particular, all sections $x\mapsto U(t,x)$ are well defined as functions in $L^{1}(\Omega)$ for every $t\geq 0$, including the initial datum at $t=0$.

\item  (Dirichlet/Neumann boundary conditions). From the space regularity assumptions (\ref{hp:s-reg-U}) and (\ref{hp:s-reg-V}) we know that, for almost every $t>0$, the functions $x\mapsto U(t,x)$ and $x\mapsto V(t,x)$ admit a trace on $\partial\Omega$. This implies that the two conditions in (\ref{hp:DNBC-UV}) make sense.

\item  (Sign condition). The left-hand side of (\ref{hp:sign-UV}) is the sum of $d$ terms that are the product of a function in $W^{1,1}(\Omega)$ and a signed measure. In general this product is not well defined, but in this case the measure is the gradient of a function in $BV(\Omega)$, and therefore it is absolutely continuous with respect to the $d-1$ dimensional Hausdorff measure $\mathcal{H}^{d-1}$, and the precise representative of the Sobolev function $V$ (namely the limit as $r\to 0^{+}$ of its average in the ball with radius $r$ centered in $x$) is defined at $\mathcal{H}^{d-1}$ almost every point. Under these assumptions, the product makes sense as a vector measure.

In any case, the $UV$-evolutions that we consider in this paper have the additional property that the function $t\mapsto V(t,x)$ is continuous with respect to $x$ for almost every $t>0$, in which case the definition of the product is less delicate. 

We observe that, if we decompose the vector measure $Du$ into its jump part $D^{J}u$ and its diffuse part $\widetilde{D}u$ (see~\cite[Definition~3.91]{AFP}), then (\ref{hp:sign-UV}) is equivalent to the sign condition on both components, namely
\begin{equation}
\langle V(t,x),\widetilde{D}U(t,x)\rangle\geq 0
\qquad\text{and}\qquad
\langle V(t,x),D^{J}U(t,x)\rangle\geq 0.
\label{hp:sign-UV-dec}
\end{equation} 

\end{itemize}

\end{em}
\end{rmk}

In the next result we show that these $UV$-evolutions in any dimension satisfy a maximum principle.


\begin{prop}[Maximum principle for $UV$-evolutions with DNBC in any space dimension]\label{prop:UVW}

Let $d$ be a positive integer, and let $\Omega\subseteq\re^{d}$ be a bounded open set with Lipschitz boundary. 

Let $(U,V)$ be a UV-evolution with Dirichlet/Neumann boundary conditions in $\Omega$, in the sense of Definition~\ref{defn:UVW-DNBC}, and let $D_{0}:[0,+\infty)\to\re$ be the nonincreasing function that appears in (\ref{hp:DNBC-UV}).

Then the function defined by 
\begin{equation}
M(t):=\max\left\{D_{0}(t),\operatorname{(ess)sup}\{U(t,x):x\in\Omega\}\strut\right\}
\qquad
\forall t\geq 0
\label{defn:M(t)}
\end{equation}
is nonincreasing (in the definition of $M(t)$ we consider the representative of $U$ that is continuous with values in $L^{1}(\Omega)$, see Remark~\ref{rmk:UV}).

\end{prop}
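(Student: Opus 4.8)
The plan is to argue by contradiction: suppose $M(t_1) > M(t_0)$ for some $0 \le t_0 < t_1$, and derive a contradiction by testing the evolution equation against a well-chosen test function. Fix a level $\ell$ with $M(t_0) < \ell < M(t_1)$, and set $\ell' = \max\{\ell, D_0(t_1)\}$; since $D_0$ is nonincreasing, $D_0(t) \le \ell'$ for all $t \ge t_1$, and since $D_0(t_0) \le M(t_0) < \ell \le \ell'$ actually $D_0(t) < \ell'$ fails to be automatic but in any case $\ell' < M(t_1)$ can be arranged by choosing $\ell$ close enough to $M(t_1)$ (here one uses $D_0(t_1) \le M(t_1)$, with the case of equality handled separately or absorbed). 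The key quantity to monitor will be
\begin{equation}
E(t) := \int_\Omega (U(t,x) - \ell')^+\,dx,
\nonumber
\end{equation}
which is absolutely continuous in $t$ because $U$ is absolutely continuous with values in $L^1(\Omega)$ and $s \mapsto (s-\ell')^+$ is $1$-Lipschitz. We have $E(t_0) = 0$ (since the essential sup of $U(t_0,\cdot)$ is at most $M(t_0) < \ell'$) while $E(t_1) > 0$ (since the essential sup exceeds $\ell'$ on a positive-measure set).

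Next I would differentiate $E$. For a.e.\ $t$, $\dot E(t) = \int_{\{U(t,\cdot) > \ell'\}} U_t(t,x)\,dx = \int_{\{U > \ell'\}} \operatorname{div} V(t,x)\,dx$ using \eqref{hp:eqn-UV}. The right-hand side should be turned into a boundary term via the divergence theorem applied on the (finite-perimeter) superlevel set $\{U(t,\cdot) > \ell'\}$. The boundary of this set decomposes into a part lying in $\partial\Omega$ and a part lying in $\Omega$ along the reduced boundary of the superlevel set, i.e.\ roughly the level set $\{U = \ell'\}$. Along the interior part, the outer normal to the superlevel set is $-\nu_U$ where $\nu_U$ points in the direction of increasing $U$, i.e.\ $DU$; the contribution is $-\int \langle V, \nu_U\rangle\,d\mathcal H^{d-1}$, and the sign condition \eqref{hp:sign-UV} (more precisely its diffuse part in \eqref{hp:sign-UV-dec}, which controls the behavior along level sets of the $BV$ function) forces this to be $\le 0$. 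Along the part on $\partial\Omega$, we are on the set where $U(t,x) > \ell' \ge \ell \ge \ell' \ge D_0(t)$ for $t \ge t_1$ (choosing $\ell'$ so this holds), so the Dirichlet alternative in \eqref{hp:DNBC-UV} is violated and therefore the Neumann alternative $\langle V, \nu\rangle \ge 0$ must hold, making that contribution also $\le 0$. Hence $\dot E(t) \le 0$ for a.e.\ $t \ge t_1$, so $E$ is nonincreasing on $[t_1,\infty)$ — but that is consistent with $E(t_1)>0$, so I actually need to run this on $[t_0, t_1]$ or rather conclude $E \equiv 0$ on $[t_0,\infty)$. The right framing: the argument shows $\dot E(t) \le 0$ whenever $\ell' \ge D_0(t)$, which holds for $t \ge $ some time; combined with $E(t_0)=0$ and monotonicity one gets $E(t_1) = 0$, contradiction — provided $\ell' \ge D_0(t)$ for all $t \in [t_0,t_1]$, which holds by choosing $\ell' = \max\{\ell, D_0(t_0)\}$ and noting $D_0$ is nonincreasing.

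The main obstacle is making the divergence-theorem step rigorous: $V(t,\cdot) \in W^{1,1}(\Omega;\re^d)$ has only an $\mathcal H^{d-1}$-a.e.\ defined precise representative, the superlevel set $\{U(t,\cdot) > \ell'\}$ is only a set of finite perimeter for a.e.\ level $\ell'$ (coarea), and one must justify that $\int_{\{U>\ell'\}}\operatorname{div}V = \int_{\partial^*\{U>\ell'\}} \langle V, \nu_{\{U>\ell'\}}\rangle\,d\mathcal H^{d-1}$ with $V$ in its precise representative — this is a Gauss–Green formula for $W^{1,1}$ vector fields over finite-perimeter domains, which holds but needs care, and one must also interface the reduced boundary of the superlevel set with the sign condition, which is phrased in terms of the measure $\langle V, DU\rangle$. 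A cleaner route that avoids picking a good level is to test \eqref{hp:eqn-UV} directly against $\beta'(U(t,x))\,\zeta(x)$ for a smooth convex nondecreasing $\beta$ vanishing on $(-\infty,\ell']$ and a cutoff, integrate by parts in space to move the divergence onto $\nabla(\beta'(U)\zeta) = \beta''(U)\nabla U\,\zeta + \beta'(U)\nabla\zeta$; but $U$ is only $BV$, so $\nabla U$ is a measure and $\beta''(U)\,\nabla U$ must be interpreted via the chain rule in $BV$ (Vol'pert / Ambrosio–Dal Maso), which again is exactly where \eqref{hp:sign-UV-dec} enters. Either way, the crux is a careful chain-rule / Gauss–Green computation in $BV$ combined with the two-sided (diffuse and jump) sign condition and the $\mathcal H^{d-1}$-a.e.\ boundary trace condition \eqref{hp:DNBC-UV}; I expect the authors to exploit the stated extra regularity (continuity of $x \mapsto V(t,x)$) to sidestep the worst of the measure-theoretic subtleties.
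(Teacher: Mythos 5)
Your proposal is correct in substance, and your ``cleaner route'' at the end is in fact precisely the paper's proof, so let me compare the two. The paper argues directly rather than by contradiction: since one may always restrict to a subinterval of time, it suffices to show $M(t)\le M(0)$, and for this the authors fix a single convex $C^{2}$ Lipschitz function $\psi$ with $\psi(\sigma)=0$ exactly for $\sigma\le M(0)$, set $E(t)=\int_{\Omega}\psi(U(t,x))\,dx$, and show $E\equiv 0$ by proving $E'\le 0$ via integration by parts, the Ambrosio--Dal Maso/Vol'pert chain rule in $BV$, and the sign condition split into its diffuse and jump parts (\ref{hp:sign-UV-dec}) --- exactly the ingredients you identified as the crux. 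This avoids everything your primary route requires: no choice of a ``good'' level $\ell'$, no coarea/finite-perimeter discussion of superlevel sets, no Gauss--Green formula for $W^{1,1}$ fields over finite-perimeter domains, and no cutoff $\zeta$, since the boundary term $\int_{\partial\Omega}\psi'(U)\langle V,\nu\rangle$ is disposed of pointwise by the dichotomy (\ref{hp:DNBC-UV}): where the Dirichlet alternative holds, $U\le D_{0}(t)\le M(0)$ forces $\psi'(U)=0$. Your level-set argument would also work, but at the cost of the measure-theoretic machinery you yourself flag; the convex-test-function version buys the same conclusion with only the $BV$ chain rule. One shared imprecision worth noting: on the Neumann part of the boundary you write that $\langle V,\nu\rangle\ge 0$ makes the flux contribution ``$\le 0$'', which does not follow from that inequality as stated; the paper's proof likewise treats this term as vanishing (``the scalar product is equal to $0$''), which is what actually happens in all the applications (where $v=0$ at the relevant faces), so the inequality in (\ref{hp:DNBC-UV}) should really be read as an equality (or reversed) for the argument to close --- this is inherited from the definition and is not a defect specific to your proof.
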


\begin{proof}

Since we can always restrict to a smaller time interval, it is enough to show that $M(t)\leq M(0)$ for every $ t\geq 0$. On the other hand, from the monotonicity of $D(t)$ it follows that $D_{0}(t)\leq D_{0}(0)\leq M(0)$, and hence we can limit ourselves to showing that for every $t\geq 0$ it turns out that
\begin{equation}
U(x,t)\leq M(0)
\qquad
\text{for almost every $x\in\Omega$.}
\label{th:U<M(0)}
\end{equation}

To this end, we consider a convex function $\psi\in C^{2}(\re)$ that is Lipschitz continuous with 
\begin{equation}
\psi(\sigma)=0
\qquad\text{if and only if}\qquad
\sigma\leq M(0),
\label{hp:psi=0}
\end{equation}
and then we set
\begin{equation}
E(t):=\int_{\Omega}\psi(U(t,x))\,dx
\qquad
\forall t\geq 0.
\label{defn:E(t)}
\end{equation}

We observe that (\ref{hp:psi=0}) and the convexity of $\psi$ imply that $\psi(\sigma)\geq 0$ for every $\sigma\in\re$, and hence
\begin{equation}
E(t)\geq 0
\qquad
\forall t\geq 0,
\label{eqnW:E>=0}
\end{equation}
and in addition $\psi(\sigma)>0$ for every $\sigma>M(0)$, from which we deduce that 
\begin{equation}
\text{$E(t)=0$ if and only if (\ref{th:U<M(0)}) holds true}.
\label{eqnW:E-iff-M}
\end{equation}

Moreover, the function $E(t)$ is absolutely continuous because of the boundedness of $\psi'$ and the time regularity (\ref{hp:t-reg-U}) of $U$. We claim that $E'(t)\leq 0$ for almost every $t\geq 0$. Since $E(0)=0$,  this claim, combined with  (\ref{eqnW:E>=0}), would imply that $E(t)=0$ for every $t\geq 0$, and this would complete the proof because of (\ref{eqnW:E-iff-M}).

Using (\ref{hp:eqn-UV}), we can write the time-derivative of the integral (\ref{defn:E(t)}) in the form
\begin{equation}
E'(t)=
\int_{\Omega}\psi'(U(t,x))U_{t}(t,x)\,dx=
\int_{\Omega}\psi'(U(t,x))\operatorname{div}V(t,x)\,dx.
\nonumber
\end{equation}

Now the space regularity of $U$ and $V$ is enough to integrate by parts, leading to (with some abuse of notation, because the scalar product is a measure and not a function)
\begin{equation}
E'(t)=-\int_{\Omega}\langle D[\psi'(U(t,x))],V(t,x)\rangle\,dx.
\label{eqn:E'(t)}
\end{equation}

In the integration by parts we neglected the boundary term
\begin{equation}
\int_{\partial\Omega}\psi'(U(t,x))
\langle V(t,x),\nu(x)\rangle\,d\sigma
\nonumber
\end{equation}
which is equal to~0 for almost every $t\geq 0$ because of (\ref{hp:DNBC-UV}). Indeed, for almost every $x\in\partial\Omega$ we know that either the scalar product is equal to~0, or
\begin{equation}
U(t,x)\leq
D_{0}(t)\leq
D_{0}(0)\leq
M(0),
\nonumber
\end{equation}
in which case $\psi'(U(t,x))=0$ because of (\ref{hp:psi=0}). Now from the chain rule for bounded variation functions (see~\cite[Theorem~3.96]{AFP}) we know that
\begin{equation}
D[\psi'(U(t,x))]=
\psi''(U(t,x))\widetilde{D}U(t,x)+
\frac{\psi'(U^{+}(t,x))-\psi'(U^{-}(t,x))}{U^{+}(t,x)-U^{-}(t,x)}D^{J}U(t,x),
\nonumber
\end{equation}
where $DU=\widetilde{D}U+D^{J}U$ is the usual decomposition of the vector measure $DU$, and $U^{+}$ and $U^{-}$ are the traces of $U$ on the two sides of the jump set.

Plugging this equality into (\ref{eqn:E'(t)}) we deduce that (again with some abuse of notation, because the two scalar products are measures)
\begin{eqnarray*}
E'(t)& = & -\int_{\Omega}\psi''(U(t,x))\langle\widetilde{D}U(t,x),V(t,v)\rangle\,dx
\\[1ex]
& &
\mbox{}-\int_{\Omega}\frac{\psi'(U^{+}(t,x))-\psi'(U^{-}(t,x))}{U^{+}(t,x)-U^{-}(t,x)}
\langle D^{J}U(t,x),V(t,v)\rangle\,dx,
\end{eqnarray*}
and we conclude by observing that both integrals are nonnegative because of the convexity of $\psi$ and the sign conditions (\ref{hp:sign-UV-dec}).
\end{proof}


\begin{rmk}[Case with only Neumann boundary conditions]\label{rmk:Neumann-only}
\begin{em}

From the proof it is clear that, when for almost every $t>0$ the second condition in (\ref{hp:DNBC-UV}) is satisfied for almost every $x\in\partial\Omega$, then $D_{0}(t)$ plays no role. In particular, we do not need to consider the maximum with $D_{0}(t)$ in (\ref{defn:M(t)}), or equivalently we can take $D_{0}(t)\equiv -\infty$.

\end{em}
\end{rmk}

\subsection{Proof of Proposition~\ref{prop:uv-1}}

\paragraph{\textmd{\textit{Maximum principle}}}

We claim that the pair
\begin{equation}
U(t,x):=u(t,x),
\qquad\qquad
V(t,x):=v(t,x)
\nonumber
\end{equation}
is a UV-evolution with Dirichlet/Neumann (and actually just Neumann in this case) boundary conditions according to Definition~\ref{defn:UVW-DNBC} with $d=1$, $\Omega=(a,b)$, and no need of $D_{0}(t)$ (see Remark~\ref{rmk:Neumann-only}).

Indeed, all the assumption on $U$ and $V$ in Definition~\ref{defn:UVW-DNBC} follow immediately from the corresponding assumptions on $u$ and $v$ in Definition~\ref{defn:uv-NBC}. 

At this point from Proposition~\ref{prop:UVW} it follows that the function $M(t)$ defined by (\ref{defn:M(t)}) is nonincreasing, but in this case $M(t)$ coincides with the essential supremum $M^{+}(t)$.

The monotonicity of $M^{-}(t)$ can be obtained by applying the maximum principle to the pair $(-u,-v)$, which is again a $uv$-evolution with Neumann boundary conditions. 

\paragraph{\textmd{\textit{Monotonicity of the total variation}}}

To begin with, we observe that it is enough to prove the monotonicity of the positive total variation, because the negative total variation of $u$ is the positive total variation of $-u$, and we have already observed that $(-u,-v)$ is again a $uv$-evolution with Neumann boundary conditions. 

To this end, for every positive integer $m$ we introduce the positive $m$-variation
\begin{equation}
TV_{m}^{+}(t):=\sup\left\{
\sum_{i=1}^{2m}(-1)^{i}u(t,x_{i}):
a< x_{1}\leq x_{2}\leq\ldots\leq x_{2m}< b.
\right\}.
\nonumber
\end{equation}

We observe that
\begin{equation}
\TV^{+}(t)=
\sup_{m\geq 1}TV_{m}^{+}(t)=
\lim_{m\to +\infty}TV_{m}^{+}(t)
\nonumber
\end{equation}

Therefore, if we prove that the function $t\mapsto TV_{m}^{+}(t)$ is nonincreasing for every $m\geq 1$, then thesis follows. We prove the monotonicity of $TV_{m}^{+}(t)$ by induction on $m$.

\subparagraph{\textmd{\textit{Case $m=1$}}}

We claim that the pair defined by
\begin{equation}
U(t,x_{1},x_{2}):=u(t,x_{2})-u(t,x_{1}),
\qquad
V(t,x_{1},x_{2}):=\left(-v(t,x_{1}),v(t,x_{2})\right)
\nonumber
\end{equation}
is a UV-evolution with Dirichlet/Neumann boundary conditions in the sense of Definition~\ref{defn:UVW-DNBC} with 
\begin{equation}
d:=2,
\qquad
\Omega:=\left\{(x_{1},x_{2})\in(a,b)^{2}:a< x_{1}< x_{2}< b\right\},
\qquad
D_{0}(t)\equiv 0.
\nonumber
\end{equation}

If the claim is true, then the monotonicity of $TV_{1}^{+}(t)$ follows from Proposition~\ref{prop:UVW}, because in this case the function $M(t)$ defined by (\ref{defn:M(t)}) coincides with $TV_{1}^{+}(t)$.

So let us check that $U$ and $V$ satisfy the properties in Definition~\ref{defn:UVW-DNBC}. The regularity and the evolution equation follow from the corresponding properties of $u$ and $v$ in Definition~\ref{defn:UVW-DNBC}. The sign condition (\ref{hp:sign-UV}) follows from (\ref{hp:sign-uv}) because
\begin{equation}
\langle V(t,x_{1},x_{2}),DU(t,x_{1},x_{2})\rangle=
v(t,x_{1})\cdot Du(t,x_{1})+v(t,x_{2})\cdot Du(t,x_{2})
\nonumber
\end{equation}
is the sum of two nonnegative measures. 

Finally, we observe that $\Omega$ is a triangle, and its boundary is contained in the three lines described by the three equalities $a=x_{1}$, $x_{1}=x_{2}$, and $x_{2}=b$.
\begin{itemize}

\item  In the side with $a=x_{1}$ the normal vector is $\nu(x_{1},x_{2})=(-1,0)$, and hence 
\begin{equation}
\langle V(t,x_{1},x_{2}),\nu(x_{1},x_{2})\rangle=
v(t,x_{1})=
v(t,a)=
0.
\nonumber
\end{equation}

\item  In the side with $x_{2}=b$ the normal vector is $\nu(x_{1},x_{2})=(0,1)$, and hence 
\begin{equation}
\langle V(t,x_{1},x_{2}),\nu(x_{1},x_{2})\rangle=
v(t,x_{2})=
v(t,b)=
0.
\nonumber
\end{equation}

\item  In the side with $x_{1}=x_{2}$ it turns out that
\begin{equation}
U(t,x_{1},x_{2})=0\leq D_{0}(t).
\nonumber
\end{equation}

\end{itemize}

Therefore, in all the sides of $\partial\Omega$ the Dirichlet/Neumann boundary conditions (\ref{hp:DNBC-UV}) are satisfied, and this completes the proof.

\subparagraph{\textmd{\textit{Inductive step}}}

We assume that $TV_{m}^{+}(t)$ is nonincreasing for some positive integer $m$, and we prove that also $TV_{m+1}^{+}(t)$ is nonincreasing. 

To this end, we consider the pair defined by
\begin{equation}
U(t,x):=\sum_{i=1}^{2m+2}(-1)^{i}u(t,x_{i}),
\qquad\quad
V(t,x):=\sum_{i=1}^{2m+2}(-1)^{i}v(t,x_{i})e_{i},
\nonumber
\end{equation}
where $x=(x_{1},\ldots,x_{2m+2})$, and $e_{i}$ denotes the $i$-th vector of the canonical basis of $\re^{2m+2}$. We claim that this pair is a UV-evolution with Dirichlet/Neumann boundary conditions according to Definition~\ref{defn:UVW-DNBC} with
\begin{gather*}
d:=2m+2,
\qquad\qquad
D_{0}(t):=TV_{m}^{+}(t),
\\[0.5ex]
\Omega:=\left\{(x_{1},x_{2},\ldots,x_{2m+2})\in(a,b)^{2m+2}:a< x_{1}<\ldots< x_{2m+2}< b\right\}.
\end{gather*}

If this is the case, then the monotonicity of $TV_{m+1}^{+}(t)$ follows from Proposition~\ref{prop:UVW}, because \begin{equation}
TV_{m+1}^{+}(t)=\sup\{U(t,x_{1},\ldots,x_{2m+2}):(x_{1},\ldots,x_{2m+2})\in\Omega\},
\nonumber
\end{equation}
and in particular the function $M(t)$ defined by (\ref{defn:M(t)}) in this case is exactly
\begin{equation}
M(t)=
\max\{D_{0}(t),TV_{m+1}^{+}(t)\}=
\max\{TV_{m}^{+}(t),TV_{m+1}^{+}(t)\}=
TV_{m+1}^{+}(t).
\nonumber
\end{equation}

So let us check that $U$ and $V$ satisfy the assumptions in Definition~\ref{defn:UVW-DNBC}. As before, the regularity and the evolution equation follow from the corresponding properties of $u$ and $v$ in Definition~\ref{defn:UVW-DNBC}. The sign condition (\ref{hp:sign-UV}) follows from (\ref{hp:sign-uv}) because
\begin{equation}
\langle V(t,x),DU(t,x)\rangle=
\sum_{i=1}^{2m+2}v(t,x_{i})\cdot Du(t,x_{i})
\nonumber
\end{equation}
is the sum of $2m+2$ nonnegative measures.

Finally, we consider the boundary of $\Omega$, which consists of $2m+3$ ``faces'' contained in the hyperplanes corresponding to the possible equalities in the definition of $\Omega$. 
\begin{itemize}

\item  In the face with $x_{1}=a$ the normal vector is $\nu(x)=-e_{1}$, and hence 
\begin{equation}
\langle V(t,x),\nu(x)\rangle=
v(t,x_{1})=
v(t,a)=
0.
\nonumber
\end{equation}

\item  In the face with $x_{2m+2}=b$ the normal vector is $\nu(x)=e_{2m+2}$, and hence 
\begin{equation}
\langle V(t,x),\nu(x)\rangle=
v(t,x_{2m+2})=
v(t,b)=
0.
\nonumber
\end{equation}

\item  Let us finally consider the faces where $x_{i}=x_{i+1}$ for some index $i$. In this case two consecutive terms in the definition of $U$ cancel, and what remains is a competitor in the definition of $TV_{m}^{+}(t)$. It follows that in all these $2m+1$ faces of $\partial\Omega$ it turns out that
\begin{equation}
U(t,x_{1},\ldots,x_{2m+2})\leq
TV_{m}^{+}(t)=
D_{0}(t).
\nonumber
\end{equation}

\end{itemize}

This proves that the Dirichlet/Neumann boundary conditions (\ref{hp:DNBC-UV}) are satisfied, and thus completes the proof.
\qed


\subsection{Proof of Proposition~\ref{prop:joining-link}}

The time and space regularity of $u$ and $v$, as well as the evolution equation that they solve and the boundary conditions, follow exactly from Theorem~\ref{thmbibl:v}. It remains to prove that $u$ and $v$ satisfy the sign condition (\ref{hp:sign-uv}) for almost every $t\geq 0$.

To this end, let us consider any test function $\phi\in C^{1}([0,1])$, and its discrete sampling
\begin{equation}
\phi_{k}(i):=\phi\left(\frac{i}{n_{k}}\right)
\qquad
\forall i\in\{0,1,\ldots,n_{k}\}.
\nonumber
\end{equation}

From the space regularity of $u$ and $v$ we know that, for almost every $t\geq 0$, the function $x\mapsto u(t,x)$ lies in $BV((0,1))$, while the function $t\mapsto v(t,x)\phi(x)$ lies in $W^{1,1}((0,1))$ and vanishes at the boundary. Therefore, for any such $t$ it turns out that
\begin{equation}
\int_{0}^{1}Du(t,x)v(t,x)\phi(x)\,dx=
-\int_{0}^{1}u(t,x)v_{x}(t,x)\phi(x)\,dx
-\int_{0}^{1}u(t,x)v(t,x)\phi_{x}(x)\,dx.
\nonumber
\end{equation}

We point out that, as usual, there is a little abuse of notation in the left-hand side because $Du$ is actually a measure. Integrating with respect to time we deduce that
\begin{eqnarray}
\int_{t_{1}}^{t_{2}}dt\int_{0}^{1}Du(t,x)v(t,x)\phi(x)\,dx & = & 
-\int_{t_{1}}^{t_{2}}dt\int_{0}^{1}u(t,x)v_{x}(t,x)\phi(x)\,dx
\nonumber
\\
& &
\mbox{}-\int_{t_{1}}^{t_{2}}dt\int_{0}^{1}u(t,x)v(t,x)\phi_{x}(x)\,dx
\label{eqn:int-ux-v}
\end{eqnarray}
for every choice of the times $t_{2}\geq t_{1}\geq 0$.

Analogously, by a discrete integration by parts (which is actually an algebraic manipulation of the sums, where we exploit also the discrete Neumann boundary conditions (\ref{eqn:DNBC-k})), we obtain that
\begin{multline}
\quad
\sum_{i=1}^{n_{k}}D_{n_{k}}^{+}u_{k}(t,i)\cdot v_{k}(t,i)\cdot\phi_{k}(i)
\\
=-\sum_{i=1}^{n_{k}}u_{k}(t,i)\cdot D_{n_{k}}^{-}v_{k}(t,i)\cdot\phi_{k}(i-1)
-\sum_{i=1}^{n_{k}}u_{k}(t,i)\cdot v_{k}(t,i)\cdot D_{n_{k}}^{-}\phi_{k}(i)
\quad
\label{eqn:ibp-discrete}
\end{multline}
for every $t\geq 0$ and every positive integer $k$. If we integrate with respect to time, and we rewrite the sums as integrals of piecewise constant functions, we deduce that
\begin{multline*}
\quad
\int_{t_{1}}^{t_{2}}dt\int_{0}^{1}
D_{n_{k}}^{+}u_{k}(t,\lceil n_{k}x\rceil)\cdot v_{k}(t,\lceil n_{k}x\rceil)\cdot\phi_{k}(\lceil n_{k}x\rceil)\,dx
\\=
-\int_{t_{1}}^{t_{2}}dt\int_{0}^{1}
u_{k}(t,\lceil n_{k}x\rceil)\cdot D_{n_{k}}^{-}v_{k}(t,\lceil n_{k}x\rceil)\cdot\phi_{k}(\lceil n_{k}x\rceil-1)\,dx
\\
-\int_{t_{1}}^{t_{2}}dt\int_{0}^{1}
u_{k}(t,\lceil n_{k}x\rceil)\cdot v_{k}(t,\lceil n_{k}x\rceil)\cdot D_{n_{k}}^{-}\phi_{k}(\lceil n_{k}x\rceil)\,dx
\qquad
\end{multline*}
for every positive integer $k$ and every choice of the times $t_{2}\geq t_{1}\geq 0$.

Now we are allowed to pass to the limit in the two double integrals of the right-hand side, because in each of them the integrand is the product of two terms that converge strongly (the ones with $u_{k}$ and $\phi_{k}$) and one term that converges weakly in the pair $(t,x)$ (the one with $v_{k}$). Since the limits of these two integrals are the two integrals in the right-hand side of (\ref{eqn:int-ux-v}), we conclude that
\begin{multline}
\lim_{n\to +\infty}\int_{t_{1}}^{t_{2}}dt\int_{0}^{1}
D_{n_{k}}^{+}u_{k}(t,\lceil n_{k}x\rceil)\cdot v_{k}(t,\lceil n_{k}x\rceil)\cdot\phi_{k}(\lceil n_{k}x\rceil)\,dx
\\
=\int_{t_{1}}^{t_{2}}dt\int_{0}^{1}Du(t,x)\cdot v(t,x)\cdot\phi(x)\,dx.
\label{th:ux-v}
\end{multline}

Now we observe that, if the test function $\phi$ is nonnegative, then from (\ref{defn:vk}) and (\ref{hp:svarphi'}) we deduce that in the left-hand side we have a limit of integrals of nonnegative functions, and hence 
\begin{equation}
\int_{t_{1}}^{t_{2}}dt\int_{0}^{1}Du(t,x)\cdot v(t,x)\cdot\phi(x)\,dx\geq 0
\nonumber
\end{equation}
for every choice of the nonnegative test function $\phi$ and of the times $t_{2}\geq t_{1}\geq 0$. Since the times are arbitrary, we deduce that, for every nonnegative test function $\phi$, there exists a subset $E_{\phi}\subseteq[0,+\infty)$ with Lebesgue measure equal to~0 such that
\begin{equation}
\int_{0}^{1}Du(t,x)\cdot v(t,x)\cdot\phi(x)\,dx\geq 0
\qquad
\forall t\in[0,+\infty)\setminus E_{\phi}.
\label{est:key-point}
\end{equation}

The set $E_{\phi}$ might depend on $\phi$, but we can always take a countable set $\mathcal{D}$ of nonnegative test functions that is dense in the nonnegative functions of $C^{1}([0,1])$, and a common subset $E\subseteq[0,+\infty)$ with Lebesgue measure equal to~0, such that
\begin{equation}
\int_{0}^{1}Du(t,x)\cdot v(t,x)\cdot\phi(x)\,dx\geq 0
\qquad
\forall\phi\in\mathcal{D},
\quad
\forall t\in[0,+\infty)\setminus E,
\nonumber
\end{equation}
which guarantees that the sign condition (\ref{hp:sign-uv}) is satisfied for any such $t$.  
\qed

\begin{rmk}
\begin{em}

We observe that (\ref{th:ux-v})  is equivalent to saying that 
\begin{equation}
D_{n_{k}}^{+}u_{k}\cdot v_{k}\rightharpoonup Du\cdot v
\qquad\text{weakly* as measures in }(0,+\infty)\times(0,1).
\nonumber
\end{equation}

We observe also that the key point in the proof is (\ref{est:key-point}). One might be tempted to establish this relation by passing to the limit in the right-hand side of (\ref{eqn:ibp-discrete}) before integrating with respect to time. Indeed, for almost every $t>0$ we have a bound in $L^{\infty}((0,1))$ on the function $x\mapsto v_{k}(t,\lceil n_{k}x\rceil)$, and a bound in $L^{2}((0,1))$ on its discrete derivative, and therefore these functions admit a weak limit up to subsequences.

The problem with this approach is that in Theorem~\ref{thmbibl:v} the function $v(t,x)$ is defined as the weak limit of $v_{k}$ in the pair $(t,x)$, and therefore there is no guarantee that the weak limits of the sections of $v_{k}$ at fixed times have anything to do with the sections of the limit $v$. This forces us to pass through the double integrals.

\end{em}
\end{rmk}


\subsubsection*{\centering Acknowledgments}

The authos are members of the Italian \selectlanguage{italian} ``Gruppo Nazionale per l'Analisi Matematica, la Probabilità e le loro Applicazioni'' (GNAMPA) of the ``Istituto Nazionale di Alta Matematica'' (INdAM). 

\selectlanguage{english}



\label{NumeroPagine}

\end{document}